\def\@copyrightspace{\relax}
\newtheorem{theorem}{Theorem}
\newtheorem{lemma}{Lemma}
\newtheorem{proposition}{Proposition}
\newtheorem{claim}{Claim}
\title{Planar Graphs that Need Four Pages\footnote{To appear in Journal of Combinatorial Theory, Series B (submitted June 12, 2019).}}
\author{Mihalis Yannakakis\\
       {Computer Science}\\
       {Columbia University}\\
       {New York, NY}\\
       {mihalis@cs.columbia.edu}
}
\date{}
\begin{document}
%%%%%%spacing after figures,tables%%%%%%
\setlength{\textfloatsep}{10 pt}
%%%%%%%%%%%%%%%%%%%%%%%%%%%%%%%%%%%%%%%%
\clearpage\maketitle
\thispagestyle{empty}
\begin{abstract}
We show that there are planar graphs that require four pages in any book embedding.
\end{abstract}
%\newpage

\section{Introduction}

A {\em book embedding} of a graph consists of a linear order of its nodes and a partitioning of its edges, so that the nodes can be placed in
order on a straight line (the ``spine" of the book) and the edges 
in each part can be drawn on a separate half-plane bordered by the line 
(a ``page" of the book) so that the edges on the same page do not intersect.
The objective is to find a book embedding
that uses the minimum number of pages.
This minimum number is called
the {\em pagenumber} (or {\em book thickness}) of the graph.

Book embeddings were introduced in \cite{Oll} and \cite{BK}.
They were studied in connection with an approach to fault-tolerant VLSI design
\cite{R,CLR}, and have applications also in sorting
using stacks, in graph drawing, complexity theory, and other areas.
Computationally, the problem of computing the pagenumber of a graph is hard:
it is NP-complete to decide whether a planar graph has pagenumber 2 \cite{W,CLR}.
Note that in the case of two pages, 
the crux of the problem is the node-embedding part
(the linear ordering of the nodes): once the node ordering is fixed, it is easy to test whether two pages suffice. In general, the subproblem of minimizing the number of pages for a given fixed node ordering is itself also NP-hard \cite{GJMP}.

Graphs with pagenumber one are exactly the outerplanar graphs.
Graphs with pagenumber two are exactly the planar subhamiltonian graphs, i.e.
the subgraphs of planar Hamiltonian graphs \cite{BK}.
In \cite{Ya1,Ya2} we showed that all planar graphs can be embedded in four pages,
and gave a linear time algorithm for this purpose
(references \cite{BS} and \cite{H} gave
earlier algorithms that embed planar graphs in nine and seven pages respectively).
In the conference paper \cite{Ya1} we stated also that there 
are planar graphs that require four pages,
and outlined briefly the approach and the structure of the construction.
The present paper gives the full details of the construction 
and the proof that the constructed planar graph has pagenumber at least four.

Besides planar graphs, there has been extensive work 
on book embeddings for various other classes of graphs, for example graphs of bounded genus \cite{HI,M}, bounded treewidth \cite{GH}, 1-planar graphs \cite{BBKR}.

Before getting into the technical details of the construction and the proof,
we make a few remarks regarding the issues involved and our approach to address them.
To show a lower bound of four on the pagenumber of planar graphs
means finding a planar graph $G$ and showing that 
no matter how we order its nodes and 
how we partition its edges into three pages, there will be a violation
(two conflicting edges on the same page).
A major obstacle in this regard stems from the computational complexity of the
problem: The problem is NP-complete, which means that if $NP \neq coNP$,
as is widely believed to be the case, there is in general no short way
to prove the nonexistence of a suitable node ordering 
and edge partitioning for a given graph;
that is, any proof has to be in general at least superpolynomially long
in the size of the graph, and will likely amount  to 
examining essentially all (or at least a large number of) possibilities 
to ensure that they do not work.
An important difference here is that we do not have to deal with an arbitrary
general graph, but with a specific graph $G$ of our own design
that is suitable for our purpose.
If there was a very small suitable graph $G$, then the complexity would not be
an issue. This is the case for example in the graph coloring problem,
where there is an extremely small planar graph (namely, $K_4$) that needs 4 colors,
so the lower bound is trivial.
Unfortunately, this does not seem to be the case in the book embedding problem.
For example, \cite{BKZ} studied experimentally the book embedding problem
by formulating it in terms of the SAT (Boolean formula satisfiability) problem and using
a SAT solver; the software handles graphs with up to 500-700 nodes. In their experiments 
searching for planar graphs that require four pages 
(they tried both random graphs and crafted graphs
in certain families) none was encountered, which led the authors to
hypothesize that perhaps three pages are enough for all planar graphs.

The way we deal with the complexity of the problem in our construction
is by building up the graph (and the proof) in stages, to control
the exponential explosion in case analysis. As in
NP-completeness reductions, we design and use gadgets (`small' graphs that have
useful properties) as building blocks.   We start with a small graph $Q_1$ (10 nodes)
which we analyze in some detail to characterize its book embeddings under some
strong restrictions. Then we use this gadget to analyze a somewhat larger graph $Q_2$,
and this is used in turn for a larger gadget $Q$ under weaker restrictions. 
The gadgets are used in the construction of the final graph $G$ that cannot be embedded
in three pages. The properties we showed for the gadgets restrict significantly the
possible book embeddings of $G$ and make the analysis tractable.

The rest of the paper is organized as follows.
Section 2 provides basic definitions and some simple observations.
In Section 3 we design the gadgets and prove their properties.
Section 4 gives the definition of the graph $G$ and proves that
$G$ requires four pages.

\section{Preliminaries}

Let $G=(N,E)$ be a (undirected) graph, and $\pi$ a linear ordering
(a permutation) of its nodes.
We say that two edges $(a,b), (c,d)$ {\em conflict} in the ordering $\pi$
if $\pi(a) < \pi(c) < \pi(b) < \pi(d)$ or $\pi(c) < \pi(a) < \pi(d) < \pi(b)$;
that is, if we place the nodes on a line ordered according to $\pi$
and draw the edges (as curves) on a half-plane bordered by the line,
two edges conflict iff they intersect.
A {\em book embedding} of $G$ in $k$ pages consists of 
(1) a linear ordering $\pi$ of its nodes, and (2) a coloring of its edges
with $k$ colors (the ``pages") so that conflicting edges in $\pi$ receive different colors.
The {\em pagenumber} of $G$ is the minimum number $k$ of pages such that
$G$ has a book embedding in $k$ pages.

We will usually show in figures a (partial) book embedding in 3 pages by showing
the positions of the nodes on a line and showing edges of color 1 as red dashed curves, color 2 as solid blue, and color 3 as dotted green.
To simplify notation, we will identify the positions of the nodes 
on the line with the nodes, and refer for example to node $a$ on the line
(instead of point $\pi(a)$), to interval $(a,b)$ of the line
(instead of interval $(\pi(a),\pi(b))$), and so forth.

Alternatively, a book embedding of a graph $G$ can be defined
by a mapping of its nodes to (distinct) points on a circle,
drawing the edges as straight line segments (chords) inside the circle
and coloring them with $k$ colors so that intersecting edges receive
different colors.
From such a circle embedding  one can obtain a linear embedding
by cutting the circle at any point and then ordering the nodes on the line
in either of the two directions, clockwise or counterclockwise
(the edges retain their colors).
Conversely, given a linear embedding, one
can obtain a circle embedding by connecting the two ends of the line
to form a circle that encloses all the edges.
Note that a circle embedding corresponds to $2|N|$ linear embeddings
that are essentially equivalent.

Given a book embedding of a graph $G$, we will say that an edge {\em exits}
an interval $(a,b)$ if one node of the edge is in the open interval $(a,b)$ and the other node is outside the closed interval $[a,b]$.
Similarly, for a circle embedding, an edge exits an arc $(a,b)$ if 
the two nodes of the edge are on the two different open arcs 
of the circle between $a$ and $b$.

The following proposition gives some basic
simple observations that are used throughout the paper, usually without making explicit reference to the proposition.

\begin{proposition}\label{closed}
(1) If in a book embedding of a graph $G$ there is a path $p_i$ between
two nodes $a, b$ all of whose edges have the same color $i$,
then there is no edge of color $i$ that exits the interval $(a,b)$ 
and connects two nodes that are not on the path $p_i$.\\
(2) If in a 3-page embedding of $G$ there are paths $p_1, p_2, p_3$ of all 3 colors $1, 2, 3$ respectively between two nodes $a,b$, 
then there is no edge that exits the interval $(a,b)$ and connects two nodes that are not in any of these paths.\\ 
(3) If in a 3-page embedding of $G$ there are paths $p_1, p_2, p_3$ of all 3 colors $1, 2, 3$ between two nodes $a,b$,
then for every connected component $C$ of the graph $G \setminus (p_1 \cup p_2 \cup p_3)$ obtained by deleting the nodes of $p_1, p_2, p_3$ from $G$,
either all the nodes of $C$ are in the interval $(a,b)$ or they are
all outside $(a,b)$.
\end{proposition}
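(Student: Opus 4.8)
The plan is to prove (1) by a direct interleaving argument about conflicts, and then obtain (2) and (3) as short formal consequences.

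For (1), I would fix the book embedding and assume, for contradiction, that some edge $e=(u,v)$ of color $i$ exits $(a,b)$ with $u\in(a,b)$, $v\notin[a,b]$, and $u,v\notin p_i$. Placing the nodes on the spine, I would take without loss of generality $\pi(a)<\pi(b)$ and $\pi(v)<\pi(a)$ (the case $\pi(v)>\pi(b)$ is symmetric), so that $e$ spans the open position-interval $I=(\pi(v),\pi(u))$. The key observation is that $a$ lies inside $I$ (since $\pi(v)<\pi(a)<\pi(u)$) while $b$ lies outside $I$ (since $\pi(b)>\pi(u)$). Because $u,v\notin p_i$, no vertex of $p_i$ sits at an endpoint of $I$, so every vertex of $p_i$ is either strictly inside or strictly outside $I$. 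As the path walks from $a$ (inside) to $b$ (outside), this inside/outside status must flip across some edge $g=(w_j,w_{j+1})$ of $p_i$; that edge then has exactly one endpoint in $I$, which is precisely the condition for $g$ to conflict with $e$. Since $g$ and $e$ are both colored $i$, this contradicts the validity of the embedding and proves (1).

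Statement (2) would then follow immediately: if an edge $e=(u,v)$ exits $(a,b)$ with $u,v$ in none of $p_1,p_2,p_3$, let $j\in\{1,2,3\}$ be its color; since $u,v\notin p_j$, part (1) applied to the monochromatic path $p_j$ is violated. For (3), I would argue by connectivity. Suppose some component $C$ of $G\setminus(p_1\cup p_2\cup p_3)$ contained a node $x\in(a,b)$ and a node $y\notin[a,b]$. Since $a,b$ belong to the paths, no node of $C$ can sit at position $\pi(a)$ or $\pi(b)$, so every node of $C$ is either strictly inside $(a,b)$ or strictly outside $[a,b]$. Following a path inside $C$ from $x$ to $y$, the status flips across some edge $(x',y')$ with $x'\in(a,b)$ and $y'\notin[a,b]$; this edge exits $(a,b)$ and has both endpoints in $C$, hence in none of the three paths, contradicting (2). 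Thus every component lies wholly inside or wholly outside $(a,b)$.

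The only real work is in (1); parts (2) and (3) are formal consequences via, respectively, a case split on the color of the offending edge and a discrete intermediate-value argument along a path. I expect the main point to guard against in (1) is the degeneracy that a vertex of $p_i$ might coincide in spine position with an endpoint of $e$: this is exactly what the hypothesis $u,v\notin p_i$ rules out, ensuring that the inside/outside status is well defined on every vertex of $p_i$ and that the flipping edge $g$ genuinely conflicts with $e$ rather than merely sharing an endpoint with it.
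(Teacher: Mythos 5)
Your proposal is correct and takes essentially the same approach as the paper: the paper proves (1) by inducting along $p_i$ to show that all of its vertices lie on the same side of the offending same-colored edge (so $a$ and $b$ cannot be separated by it), which is just the contrapositive packaging of your ``status must flip across some edge of the path'' argument, and it derives (2) and (3) exactly as you do. No gaps.
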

\begin{proof}
(1) Let $p_i$ be the path $a=v_0, v_1, \ldots, v_m=b$,
and let $(x,y)$ be a color-$i$ edge whose nodes are not on $p_i$.
If a node $v_j$ of the path is in the interval $(x,y)$, then the
next node $v_{j+1}$ must be also in $(x,y)$ because the edges
$(v_j,v_{j+1}), (x,y)$ have the same color, hence they do not conflict.
Thus, by induction, if $a=v_0$ is in the interval $(x,y)$ then
all the nodes of the path are in $(x,y)$, hence also $b$.
Similarly, if $a=v_0$ is not in the interval $(x,y)$
then $b$ is not in the interval $(x,y)$ either.
In either case, $(x,y)$ does not exit the interval $(a,b)$.

(2) Follows from (1).

(3) Any two nodes $x, y$ of $C$ are connected by a path 
$x=u_0, u_1, \ldots, u_l=y$ that does not contain any node of the paths $p_i$.
By (2), $u_j$ is in the interval $(a,b)$ iff $u_{j+1}$ is also in $(a,b)$.
Hence by induction, $x=u_0$ is in $(a,b)$ iff every node of the path,
and in particular $y$, is in $(a,b)$. 
\end{proof}

We will say that a node $u$ {\em reaches} an interval $(a,b)$ 
(or any subset of the line)
if there is an edge from $u$ to a node in the interval $(a,b)$
(resp., in the subset).
Thus, for example by the above Proposition (part 2),
if in a 3-page embedding there are paths $p_1, p_2, p_3$ of all three colors between nodes $a, b$,
then the nodes embedded in the interval $(a,b)$
cannot reach any nodes outside $(a,b)$ except possibly the nodes
of the paths $p_i$.

\section{The Gadgets}

Figure \ref{fig:q1} shows our first ``gadget" $Q_1$.
We will refer to nodes 1, 2 as the {\em outer terminals}, to nodes $a,b$ as the
{\em inner terminals}, and to nodes $c_1, c_2$ as the {\em centers}.
We will use $Q_1+12$ to denote the graph $Q_1$ with the additional edge $(1,2)$
connecting the outer terminals, and we will use $Q_1 +ab$ to denote the graph with the
additional edge $(a,b)$ connecting the inner terminals.

\begin{figure}[h]
\centering
\vspace*{-1.2cm}
\includegraphics[scale=0.6]{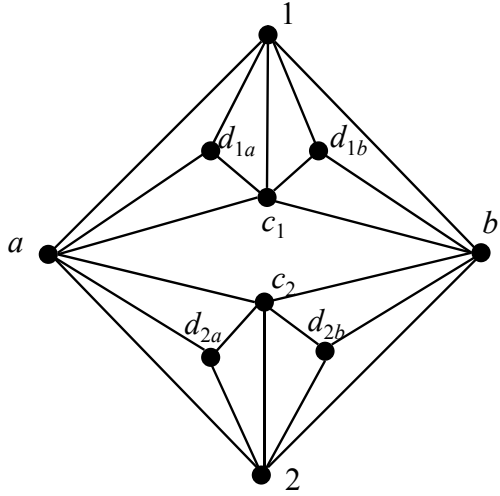}
\vspace*{-4cm}
\caption{The graph $Q_1$.}
\vspace*{-0.2cm}
\label{fig:q1}
\end{figure}

\begin{lemma}\label{lem:inside}
There is no 3-page embedding of $Q_1+12$ or of $Q_1+ab$ such that
(1) the inner terminals $a, b$ lie in the interval $(1,2)$ between the outer terminals, (2) the centers $c_1, c_2$ of $Q_1$ lie in the interval $(a,b)$, and (3) all edges from node 1 to the closed interval $[a,b]$
use the same color, say color 1, and all edges from node 2 to $[a,b]$
use color 2.
%Furthermore, There is no 3-page embedding of the graph $Q'_1$ consisting of
%$Q_1$ and the edge $(1,2)$ satisfying the above conditions (1), (3) and (2')
%nodes $c_1, c_2$ are in the interval $(a,b)$ (while other nodes are not restricted).
\end{lemma}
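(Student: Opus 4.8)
The plan is to assume a 3-page embedding satisfying (1)--(3) exists and to reach a contradiction by exhibiting an edge of $Q_1$ that is blocked from all three colors. After cutting the circle I read the spine from left to right, and by the reversal symmetry of book embeddings I may take $a<b$; then (1)--(2) force the nesting $1<a<b<2$ with $c_1,c_2\in(a,b)$. By (3) the edges from $1$ into $[a,b]$ are all color~1 and those from $2$ into $[a,b]$ are all color~2, so the edges joining $1$ to interior points of $(a,b)$ form a color-1 ``left fan'' and those joining $2$ to interior points form a color-2 ``right fan.'' The first thing I would read off from Figure~\ref{fig:q1} are the adjacencies of the centers and of the four auxiliary nodes; in particular I expect each center to be joined to both outer terminals, so that $(1,c_i)$ is a color-1 fan edge and $(2,c_i)$ a color-2 fan edge, and I expect $1$ and $2$ to be joined to both of $a,b$.

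The engine of the argument is a simple crossing criterion. Call an interior edge $(u,v)$ with $a\le u<v\le b$ \emph{straddling} if $1$ has a neighbor $w$ with $u<w<v$ and $2$ has a neighbor $w'$ with $u<w'<v$: such an edge crosses the color-1 fan edge $(1,w)$ and the color-2 fan edge $(2,w')$, so it is forced to color~3. The same computation shows that in $Q_1+ab$ the added edge $(a,b)$ is straddling, since it crosses $(1,c_1)$ and $(2,c_1)$, and is therefore forced to color~3 as well. Thus both variants pin a collection of edges to color~3 before any further choices are made.

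Next I would use Proposition~\ref{closed} to organize the interior. In $Q_1+ab$ the paths $a,1,b$ (color~1), $a,2,b$ (color~2), and the single edge $a,b$ (color~3) are three monochromatic paths between $a$ and $b$, so part~(3) confines the connected component of the centers to $(a,b)$, and the remark following the proposition limits the interior nodes to reaching only $\{1,2,a,b\}$. With the interior thus pinned down, the goal is to produce two straddling (hence color-3) edges that cross each other, equivalently a single edge whose color-1 option is killed by the left fan, whose color-2 option is killed by the right fan, and whose color-3 option is killed by an already-forced color-3 edge. The added edge is what removes the last escape: in $Q_1+ab$ the forced color-3 edge $(a,b)$ supplies the final crossing, and in $Q_1+12$ the forced color of $(1,2)$ plays the analogous blocking role, which I would verify by the symmetric argument.

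The step I expect to be the main obstacle is this last one: turning the qualitative picture of ``two fans plus forced color-3 edges'' into an explicit edge of $Q_1$ with all three colors blocked. This is precisely where the detailed adjacencies of $c_1,c_2$ and the four auxiliary nodes enter, and where a bounded case analysis on the left-to-right order of the interior nodes within $(a,b)$ seems unavoidable, since whether a fan edge or a forced color-3 edge crosses a given interior edge depends only on which interior nodes lie strictly between its endpoints. I would therefore split into the few possible relative orders of the centers and auxiliary nodes and, in each case, name the crossing pair. The two variants $Q_1+12$ and $Q_1+ab$ differ only in which forced color-3 edge delivers the final crossing, so the two case analyses should run in parallel.
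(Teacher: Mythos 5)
Your setup (the two monochromatic fans from the outer terminals, the observation that an interior edge crossing a color-1 fan edge and a color-2 fan edge is forced to color 3, and the use of Proposition~\ref{closed} to confine components) matches the opening moves of the paper's proof, and your instinct that the endgame is a bounded case analysis on the relative order of the interior nodes is right. But the proposal stops exactly where the proof begins: you never exhibit the blocked edge, and you explicitly defer ``the main obstacle'' to future work. As written this is a plan, not a proof. Concretely, the paper's argument must first pin down the colors of $(a,c_2)$ and $(b,c_1)$ (each conflicts with one fan edge, they conflict with each other, and a placement argument for $d_{2a}$ rules out color 3 for $(b,c_1)$), and then prove the key structural fact that $d_{1b}$ and $d_{2a}$ must lie \emph{outside} the interval $(a,b)$, forcing $(c_1,d_{1b})$ and $(c_2,d_{2a})$ to color 3. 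Only then does the $Q_1+ab$ case close: $(a,b)$ crosses a color-1, a color-2, \emph{and} one of these color-3 edges. Your sketch points in a different direction for this case --- you force $(a,b)$ to color 3 and then confine everything to $(a,b)$ via Proposition~\ref{closed}(3) --- which is not obviously wrong, but then the ``final crossing'' you promise cannot come from $(a,b)$ itself (an edge with both endpoints in $[a,b]$ never crosses $(a,b)$), so you still owe the entire interior case analysis.

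Two further concrete problems. First, you guessed the adjacency structure incorrectly: in $Q_1$ each center is adjacent to only \emph{one} outer terminal ($c_1$ to $1$, $c_2$ to $2$; the $d$-nodes stellate the triangles $(1,a,c_1)$, $(1,b,c_1)$, $(2,a,c_2)$, $(2,b,c_2)$), so the edge $(2,c_1)$ you use to straddle $(a,b)$ does not exist (the conclusion survives via $(2,c_2)$, but it shows the case analysis you defer cannot be run without the actual graph in hand). Second, your proposed mechanism for $Q_1+12$ --- that ``the forced color of $(1,2)$ plays the analogous blocking role'' as a color-3 crossing --- does not work: $(1,2)$ shares an endpoint with every fan edge, so nothing forces it to color 3, and it crosses an interior edge only if that edge escapes to the opposite $(1,2)$ arc. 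The paper's actual argument for this case first shows that $d_{1b}$ and $d_{2a}$ must lie in the opposite arc, and then observes that the three pairwise-crossing edges $(d_{1b},b)$, $(d_{2a},a)$, $(1,2)$ each cross some color-3 edge and hence are restricted to two colors --- a pigeonhole contradiction. That step is qualitatively different from anything in your outline and is the part you would have had to discover.
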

\begin{proof}
The proof is by contradiction.
Consider any 3-page embedding of $Q_1$ that satisfies conditions 1-3 of the lemma.
Suppose without loss of generality that the nodes $a,b, c_1, c_2$ are laid out 
in the order shown in Figure \ref{fig:inside}; the other possible embeddings with the order of $a,b$ and/or $c_1, c_2$ reversed, are symmetric. The outer terminals 1,2 are not shown explicitly in the figure: 
one of them, node $s \in \{1,2\}$ is at the left end of the line and the other outer terminal $t$ is to the right of $b$,
but we do not specify which is which so that we do not have to
distinguish cases; it is irrelevant for the following arguments which one of 1,2 is $s$ at the left end and which is $t$ on the right.
The edges to 1, 2 are shown in the figure as `dangling' segments with only one node.
Thus, the red dashed dangling edges have color 1 and go to node 1,
and the solid blue dangling edges have color 2 and go to terminal 2.
We shall prove that edge $(a,c_2)$ has color 2 and $(b,c_1)$ has color 1, as shown in the figure. 

\begin{figure}[h]
\centering
\vspace*{-1cm}
\includegraphics[scale=0.6]{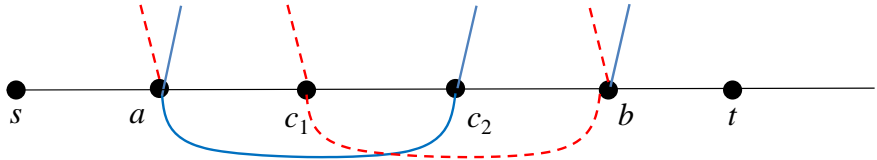}
\vspace*{-8.7cm}
\caption{Centers in $(a,b)$.}
\vspace*{-0.2cm}
\label{fig:inside}
\end{figure}

Edge $(a,c_2)$ conflicts with the edge $(1,c_1)$ which is colored 1 by the hypothesis, hence $(a,c_2)$ is colored 2 or 3.
Similarly edge $(b,c_1)$ conflicts with $(2,c_2)$, hence $(b,c_1)$ is
colored 1 or 3. The two edges $(a,c_2)$, $(b,c_1)$ conflict with each other, so they cannot both be colored 3. 
Suppose without loss of generality that $(a,c_2)$ is not colored 3, hence it is colored 2. 

Suppose that edge $(b,c_1)$ has color 3. We shall argue that it is
impossible then to embed legally node $d_{2a}$. Observe that node $c_2$ cannot reach any node, other than 1,2, outside the interval $[a,b]$ because of the color-3 edge $(b,c_1)$, the color-1 path $(a,1,b)$ and the color-2 path $(a,2,b)$. Hence $d_{2a}$ must lie in the interval $(a,b)$.
Since the edges from node 2 to this interval are colored 2 and $(a,c_2)$ is also colored 2, node $d_{2a}$ must lie in the interval $(c_2,b)$.
Then there is no legal color available for the edge $(a,d_{2a})$ because it conflicts with the color-1 edge $(c_1,1)$, the color-2 edge $(c_2,2)$ and the color-3 edge $(b,c_1)$.
We conclude that the edge $(b,c_1)$ does not have color 3, hence it
has color 1.

Consider now the possible positions of nodes $d_{1b}$ and $d_{2a}$.
We show first that they must be outside the interval $(a,b)$.
Suppose that $d_{1b}$ is inside the interval $(a,b)$,
to derive a contradiction.
Since $d_{1b}$ is adjacent to node 1 and edge $(c_1,b)$ is colored 1,
node  $d_{1b}$ cannot be in the interval $(c_1,b)$, hence it must be in
$(a,c_1)$. The edge $(b,d_{1b})$ must be colored 3 (since it conflicts
with the color-1 edge $(1,c_1)$ and the color-2 edge $(2,c_2)$).
Hence, node $c_2$ cannot reach any node, other than 1, 2, outside the
interval $(a,b)$ because of the color-1 path $(a,1,b)$, the color-2 path $(a,2,b)$, and the color-3 edge $(b,d_{1b})$. Therefore, node $d_{2a}$ 
must then also be inside the interval $(a,b)$.
However, node 2 cannot reach the subinterval $(a,c_2)$ because of the
color-2 edge $(a,c_2)$, and node $a$ cannot reach the subinterval
$(c_2,b)$ because of 
the color-1 edge $(c_1,b)$, the color-2 path $(c_2,2, b)$ and the
color-3 edge $(b,d_{1b})$
Thus, there is no legal position for node $d_{2a}$, contradiction.
We conclude that $d_{1b}$ must be outside the interval $(a,b)$.
By a symmetric argument, $d_{2a}$ also lies outside the interval $(a,b)$.

The edges $(c_1,d_{1b})$ and $(c_2,d_{2a})$ must be colored 3,
because they exit the interval $(a,b)$ and there are color-1 and -2 paths $(a,1,b)$ and $(a,2,b)$ connecting $a$ and $b$.

We can prove the lemma now for $Q_1+ab$:
The edge $(a,b)$ connecting the inner terminals
cannot be colored legally because it intersects the color-1 edge $(1,c_1)$, the
color-2 edge $(2,c_2)$ and the color-3 edge $(c_1,d_{1b})$.
This proves the claim for the graph $Q_1+ab$.

It remains to prove the claim for $Q_1+12$.
Since the edges $(c_1,d_{1b})$ and $(c_2,d_{2a})$ have the same color,
either $d_{1b}$ is to the left of $c_1$ and hence of $a$, or $d_{2a}$ is to the right of $c_2$ and hence of $b$ (or both).
Suppose without loss of generality that $d_{2a}$ is to the right of $b$. 
If it is left of $t$, i.e., within the same arc $(1,2)$ in the corresponding
embedding on a circle, then the edge $(a,d_{2a})$ must be also colored 3 (because it conflicts with the edges $(1,b), (2,b)$), but then
either $(a,d_{2a})$ or $(c_2,d_{2a})$ conflicts with the edge
$(c_1,d_{1b})$, a contradiction.
We conclude that $d_{2a}$ is right of $t$ , i.e.,
lies in the opposite arc $(1,2)$.
By a symmetric argument, the same holds for the node $d_{1b}$.
 
Since $(c_1,d_{1b})$ and $(c_2,d_{2a})$ have the same color (3),
hence do not intersect, nodes $s, a, c_1, c_2, b, t, d_{2a}, d_{1b}$
appear in this order. Consider the three edges $(d_{1b},b), (d_{2a},a), (s,t)$.
They all intersect a color-3 edge, namely  $(c_2,d_{2a}), (c_1,d_{1b}), (c_1,d_{1b})$ respectively,
hence they can only use the colors 1, 2.
However, the three edges $(d_{1b},b), (d_{2a},a), (s,t)$
conflict with each other, hence they cannot all be colored with two colors.
This proves the lemma for $Q_1+12$.
\end{proof}

\begin{lemma}\label{lem:outside}
It is not possible to embed $Q_1$ in three pages, such that
(1) the inner terminals $a, b$ lie in a subinterval $(u,v)$ between the outer terminals 1,2, (2) both centers $c_1, c_2$ lie outside the interval $[a,b]$, 
(3) all edges from node 1 to the interval $(u,v)$
use the same color, say color 1, and all edges from node 2 to $(u,v)$
use color 2, and (4)
all other edges exiting the interval $(u,v)$ (not connecting
to nodes 1, 2) are colored 3.
\end{lemma}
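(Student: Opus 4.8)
The plan is to argue by contradiction, in the same spirit as the proof of Lemma~\ref{lem:inside}. Assume a 3-page embedding of $Q_1$ satisfying conditions (1)--(4). As there, conditions (1) and (3) give two monochromatic paths between the inner terminals, namely the color-1 path $(a,1,b)$ and the color-2 path $(a,2,b)$; by Proposition~\ref{closed}(1) every edge that exits $(a,b)$ and is not incident to $1$ or $2$ must be colored $3$, which is also exactly what condition~(4) asserts for the larger interval $(u,v)$. These two paths, together with condition~(4), are the main levers: they confine the color choices of every edge leaving the region around $a,b$.

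First I would pin down the colors of the edges joining the centers to the inner terminals. Since $c_1,c_2$ lie outside $[a,b]$, each edge from a center to $a$ or to $b$ that straddles one of the inner terminals conflicts with an edge of the color-1 path and with an edge of the color-2 path, and is therefore forced to color $3$; and if a center lies outside $(u,v)$, condition~(4) forces color $3$ directly. This should determine the colors of the center edges up to the symmetric choices and, in particular, produce one or two color-3 edges anchored at the centers.

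Next I would carry out the case analysis on the positions of $c_1$ and $c_2$ relative to $(u,v)$ and to $a,b$: both centers outside $(u,v)$, one inside $(u,v)\setminus[a,b]$ and one outside, or both inside $(u,v)\setminus[a,b]$. The symmetries $1\leftrightarrow 2$ and $a\leftrightarrow b$, together with the left--right reflection of the spine, cut this down to a few essentially different configurations. In each configuration I would then try to place the satellite nodes $d_{1b},d_{2a}$ and their symmetric counterparts $d_{1a},d_{2b}$, which are adjacent to the centers and the inner terminals. Exactly as in Lemma~\ref{lem:inside}, the paths $(a,1,b)$ and $(a,2,b)$ block a satellite from reaching out of the relevant interval except through $1,2$, which pins its position; once positioned, the edge from it to the opposite inner terminal conflicts simultaneously with a color-1 edge, a color-2 edge, and a color-3 edge, leaving no legal color.

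The endgame I expect to mirror the closing argument of Lemma~\ref{lem:inside}: the forced color-3 edges at the centers, together with satellites pushed to the far side of $1,2$ on the circle, create three pairwise-conflicting edges (an edge at $a$, an edge at $b$, and the long edge across the opposite arc), each of which already crosses a color-3 edge and so may use only colors $1,2$; three mutually conflicting edges cannot be 2-colored, the desired contradiction. The main obstacle will be organizing the case analysis so that it does not explode: the difficulty relative to Lemma~\ref{lem:inside} is that here only two guaranteed monochromatic $a$--$b$ paths are available rather than three, so color $3$ is not automatically excluded and one must lean on condition~(4) and on the circle (two-arc) structure to finish each case. Getting the symmetry reductions right, so that a single generic argument covers several placements of $c_1,c_2$, is where most of the care is needed.
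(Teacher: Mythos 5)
Your overall strategy (contradiction, case analysis on where $c_1,c_2$ sit relative to $(u,v)$ and $[a,b]$, then showing some satellite node has no legal home) is the right spirit and matches the paper's approach in outline, but the proposal stops at the plan: none of the deductions that actually carry the proof are supplied. The paper's argument hinges on a specific chain you do not reach: (i) both centers cannot be outside $(u,v)$, since then all four edges $(c_i,a),(c_i,b)$ would be color~3 by condition~(4) and two of them would cross (on the circle the cyclic order would be $a,b,c_1,c_2$ rather than the alternating order needed for a one-page $4$-cycle); (ii) once, say, $c_1\in(u,a)$, the color-3 edge $(c_1,b)$ together with $(1,b),(2,b)$ and condition~(4) confines every neighbor of $a$ other than $1,2$ to $(u,b)$, which forces $c_2$ into $(u,a)$ as well --- so the ``one center on each side of $[a,b]$'' configuration, which your case list leaves open, is in fact impossible and must be ruled out, not just enumerated; (iii) with both centers in $(u,a)$, the edge $(c_1,a)$ is forced to color~1 (not~3 --- it straddles no inner terminal, so your blanket ``center edges go to color~3'' step does not determine it), and then $d_{1a}$, adjacent to $1$, $a$ and $c_1$, has no legal position. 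Without (i)--(iii) there is no proof, only a statement of intent.

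Separately, the endgame you ``expect'' --- three pairwise-conflicting edges each restricted to colors $1,2$, one of them being ``the long edge across the opposite arc'' --- is imported from the $Q_1+12$ half of Lemma~\ref{lem:inside} and is unavailable here: this lemma concerns plain $Q_1$, with neither the edge $(1,2)$ nor $(a,b)$, so the third mutually crossing edge does not exist. Moreover, condition~(4) makes ``satellites pushed to the far side of $1,2$'' a dead end of a different kind (all their edges back into $(u,v)$ would be color~3), which is not the contradiction you describe. The actual closing move is the non-placeability of $d_{1a}$, and it depends essentially on the new hypothesis (4) that distinguishes this lemma from Lemma~\ref{lem:inside}; your proposal never identifies where that hypothesis does its decisive work.
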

\begin{proof}
The proof is by contradiction.
Assume an embedding as in the lemma.
%Because of the paths $u,1,v$ and $u,2,v$, any edge that exits the interval $(u,v)$ to a node other than 1,2 must have color 3.
If $c_1$ is outside $(u,v)$ then both edges $(c_1,a)$, $(c_1,b)$ must have color 3
(by condition (4)), and similarly for $c_2$.
We cannot have both $c_1, c_2$ outside the interval $(u,v)$ because
then (at least) one of the edges $(c_1,a)$, $(c_1,b)$ would intersect
one of the edges $(c_2,a)$, $(c_2,b)$.
Therefore, at least one of $c_1, c_2$ must be in the interval $(u,v)$.

Assume without loss of generality that $c_1$ is in the
interval $(u,v)$, and, since
it is not in the interval $[a,b]$, assume wlog that
it is left of $a$, i.e., that it is in the subinterval $(u,a)$.
The edge $(c_1,b)$ must be colored 3 because of the edges $(1,a), (2,a)$. Node $a$ cannot have any edge exiting the interval $(u,b)$
to a node other than 1,2 because of condition (4), the color-3 edge $(c_1,b)$
and the color-1 and -2 edges $(1,b), (2,b)$. 
Therefore, $c_2$ lies in the interval $(u,b)$,
and since it is not in $[a,b]$, it is in $(u,a)$.
Assume without of loss of generality that $c_1$ is closer to $u$ than $c_2$;
see Figure \ref{fig:outside-same}.

\begin{figure}[h]
\centering
\vspace*{-1.1cm}
\includegraphics[scale=0.6]{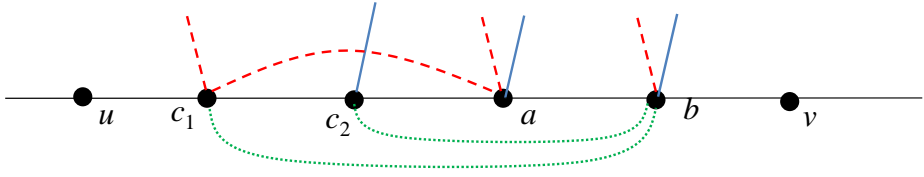}
\vspace*{-8.9cm}
\caption{Centers outside $(a,b)$.}
\vspace*{-0.2cm}
\label{fig:outside-same}
\end{figure}

The edge $(c_2,b)$ is colored 3, because of the edges
$(1,a), (2,a)$. This implies that the edge $(c_1,a)$ must be colored 1
because of the color-2 edge $(2,c_2)$.
Consider the possible position of node $d_{1a}$. Node $a$ cannot exit the interval $(c_1,b)$ because of the color-1 path $(c_1,1,b)$,
the color-2 path $(c_2,2,b)$, and the color-3 edge $(c_1,b)$.
On the other hand, node 1 cannot reach the interval $(c_1,a)$
because of the color-1 edge $(c_1,a)$. Furthermore,
$c_1$ cannot reach the interval $(a,b)$ because of the color-1 path
$(a,1,b)$, the color-2 path $(a,2,b)$ and the color-3 edge $(c_2,b)$
(see Figure \ref{fig:outside-same}).
Therefore, there is no legal position for node $d_{1a}$, a contradiction.
\end{proof}

Let $Q_2$ be the graph shown in Figure \ref{fig:q2} where each internal face (triangle)
is stellated twice more. That is, inside each triangle (e.g. $(1,a,d_{1a}), (1,d_{1a},c_1)$ etc.) we insert a center node with edges to the nodes of the triangle, and repeat 
this once more for each resulting triangle; these additional
nodes are not shown in the Figure so that it will not become too cluttered.
Note that $Q_2$ contains many copies of $Q_1$.
For example there is one copy with outer terminals 1,2 and inner terminals $a, b$.
Another copy of $Q_1$ has outer terminals $1, c_2$ and inner terminals $a, c_1$;
another has outer terminals $d_{1a}, c_2$ and inner terminals $a, c_1$.
And so forth.
We will use again $Q_2 +12$ to denote the graph $Q_2$ with the additional edge $(1,2)$.

\begin{figure}[h]
\centering
\vspace*{-1.1cm}
\includegraphics[scale=0.6]{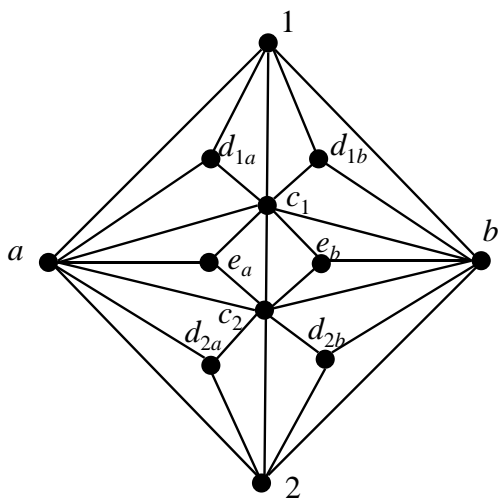}
\vspace*{-4.3cm}
\caption{The graph $Q_2$ (the triangles are stellated twice more)}
\vspace*{-0.3cm}
\label{fig:q2}
\end{figure}

\begin{lemma}\label{lem:in-out}
Suppose that in a 3-page embedding of $Q_2+12$, 
(1) the inner terminals $a, b$ lie in a subinterval $(u,v)$ between the outer terminals 1,2, (2) all edges from node 1 to the  interval $(u,v)$
use the same color, say color 1, and all edges from node 2 to $(u,v)$
use color 2, and (3) all other edges exiting the interval $(u,v)$ (not connecting
to nodes 1, 2) are colored 3.
Then one of the center nodes $c_1, c_2$ is inside the interval $(a,b)$
and the other one is outside $(a,b)$ but inside $(u,v)$. 
Furthermore, the embedding is as shown in Figure \ref{fig:in-out}
(up to reversing the order, switching $a,b$ and/or switching the indices $1,2$). 
That is, if the center node $c_1$ is outside
$(a,b)$ and it is in the interval $(u,a)$, then $d_{1b}$ is in the interval
$(b,v)$ and the edges $(c_1,c_2), (c_1,b), (c_1,d_{1b}), (a,c_2)$ all
have color 3.
\end{lemma}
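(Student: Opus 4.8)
The plan is to sandwich the statement between the two gadget lemmas already established and then pin down the residual configuration by elementary conflict and reachability arguments. First I would single out the copy of $Q_1$ sitting inside $Q_2+12$ whose outer terminals are $1,2$, whose inner terminals are $a,b$, and whose centers are $c_1,c_2$. Since $a,b\in(u,v)$ we have $[a,b]\subseteq(u,v)$, so hypotheses (2)--(3) of the present lemma restrict exactly to the coloring hypotheses demanded by the earlier lemmas: every edge from $1$ into $[a,b]$ (a fortiori into $(u,v)$) has color $1$, every edge from $2$ has color $2$, and every other edge leaving $(u,v)$ has color $3$. If both centers lay in $(a,b)$, Lemma~\ref{lem:inside} applied to this $Q_1+12$ would give a contradiction; if both lay outside $[a,b]$, Lemma~\ref{lem:outside} would (its condition (4) is precisely hypothesis (3) here). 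Hence exactly one center lies in $(a,b)$ and the other outside $[a,b]$. Using the permitted symmetries (swapping $1\leftrightarrow 2$ and $a\leftrightarrow b$, reversing the spine) I may assume $c_2\in(a,b)$ and that the remaining center $c_1$ lies on the $a$-side.

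Two things then remain: that $c_1$ actually stays inside $(u,v)$, and the exact placement of $d_{1b}$ together with the four color-$3$ edges. The mechanism for the colors is uniform and easy once positions are fixed: any edge that straddles $a$, i.e.\ runs from a point left of $a$ to a point of $(a,v)$, conflicts simultaneously with the color-$1$ edge $(1,a)$ and the color-$2$ edge $(2,a)$ and is therefore forced to color $3$. With $c_1\in(u,a)$ this immediately gives color $3$ to $(c_1,b)$ and to $(c_1,c_2)$, and, once $d_{1b}$ is shown to lie in $(b,v)$, also to $(c_1,d_{1b})$. For the placements I would lean on the reachability form of Proposition~\ref{closed}: the two colored $a$--$b$ paths $(a,1,b)$ and $(a,2,b)$, supplemented by the color-$3$ material just produced, confine the components of $Q_2$ minus the path nodes to one side of $(a,b)$, which is what forces $d_{1b}$ into $(b,v)$ and, using the twice-stellated face-centers that must be housed somewhere legal, leaves color $3$ as the only option for $(a,c_2)$.

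The main obstacle is confining $c_1$ to $(u,v)$ and, intertwined with it, locating $d_{1b}$. If instead $c_1\notin(u,v)$, then $(a,c_1)$, $(b,c_1)$ and $(c_1,c_2)$ all leave $(u,v)$ without touching $1,2$ and so are forced to color $3$; in particular $(a,c_1),(c_1,b)$ is a color-$3$ $a$--$b$ path, putting all three colors between $a$ and $b$. I would then invoke Proposition~\ref{closed}(3): every component of $Q_2$ minus these three paths lies wholly inside or wholly outside $(a,b)$, and in particular the stellation centers of the triangles incident to $c_1$ (such as those inserted in $(1,a,c_1)$) must all fall on one side. Tracking where these forced nodes can legally sit, against the three saturated pages and the exit-color rule, is expected to overload some page in each of the few positions $c_1$ could occupy relative to $u,v,1,2$, ruling out the escape. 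This case bookkeeping is where essentially all of the work lives; the rest is the clean two-lemma reduction above and the mechanical color assignments.
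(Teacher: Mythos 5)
Your opening reduction is exactly the paper's: restrict to the copy of $Q_1$ with outer terminals $1,2$ and inner terminals $a,b$, note that hypotheses (2)--(3) supply the coloring conditions of Lemmas~\ref{lem:inside} and \ref{lem:outside}, and conclude that exactly one center lies in $(a,b)$. The forcing of $(c_1,b)$ and $(c_1,c_2)$ to color 3 via conflicts with $(1,a)$ and $(2,a)$ is also correct. But from that point on the proposal defers essentially all of the real content. The proof's engine is repeated application of Lemma~\ref{lem:inside} to \emph{nested} copies of $Q_1$ inside $Q_2$ (e.g.\ the copy with outer terminals $c_1,2$ and inner terminals $c_2,b$, and the one with outer terminals $1,c_2$ and inner terminals $c_1,a$), which successively pin down the positions of $e_b, d_{2b}, e_a, d_{1a}$ and only then of $d_{1b}$. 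Your sketch never identifies this mechanism; "tracking where these forced nodes can legally sit \dots is expected to overload some page" and "the mechanical color assignments" is precisely the part that needs to be proved, and it is not mechanical.

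Two concrete failures. First, your claim that the color forcings are "easy once positions are fixed" is false for $(a,c_2)$: that edge shares the endpoint $a$ with $(1,a)$ and $(2,a)$, so the straddling argument does not apply to it. In the paper, $(a,c_2)$ gets color 3 only because $d_{2b}$ is first shown to lie in $(a,c_2)$ with $(b,d_{2b})$ colored 1 and $(2,d_{2b})$ colored 2 --- and locating $d_{2b}$ there requires first placing $e_b$ in $(c_2,b)$ and then invoking Lemma~\ref{lem:inside} on the $Q_1$ bounded by $(c_1,c_2,2,b)$. Second, your plan to confine $d_{1b}$ to $(b,v)$ via Proposition~\ref{closed}(3) cannot work as stated: $d_{1b}$ is adjacent to $1$, $b$ and $c_1$, all of which lie on the three monochromatic $a$--$b$ paths, so the component-confinement argument says nothing about it. The paper must separately exclude $d_{1b}$ from outside $(u,v)$, from $(u,d_{1a})$, and from $(d_{1a},c_1)$, using the second-level stellation centers $f$ and $g$ and yet another application of Lemma~\ref{lem:inside}; this in turn presupposes the earlier placement of $d_{1a}$. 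Your Claim-1 analogue (confining $c_1$ to $(u,v)$) has the same shape of gap: the actual contradiction comes from two applications of Lemma~\ref{lem:inside} producing two crossing color-1 edges, not from a generic page-overloading count. So the skeleton is right, but the proof is not there.
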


\begin{figure}[h]
\centering
\vspace*{-0.8cm}
\includegraphics[scale=0.6]{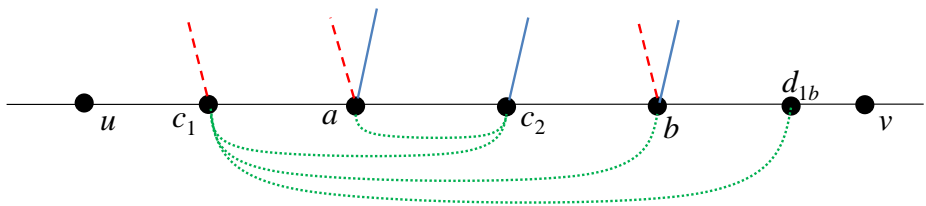}
\vspace*{-8.9cm}
\caption{Centers in and out.}
\vspace*{-0.2cm}
\label{fig:in-out}
\end{figure}

\begin{proof}
By Lemmas \ref{lem:inside}, \ref{lem:outside},
one of the centers $c_1, c_2$ must lie outside the interval $(a,b)$ and
one inside. Assume without loss of generality that $c_1$ is
outside $(a,b)$ and $c_2$ is inside.
The edge $(c_1,c_2)$ must have color 3 because of the
color-1 path $(a,1,b)$ and the color-2 path $(a,2,b)$.

We will show that the embedding conforms to the more detailed Figure \ref{fig:in-out2}
(or a symmetric one obtained by reversing the order, switching $a,b$ and/or switching $1,2$).
We will do this in several steps.
First we will show that $c_1$ is in the interval $(u,v)$.
Second we will show that $e_b$ and $d_{2b}$ have the indicated positions.
Third, we will identify the positions of $e_a$ and $d_{1a}$.
Finally, we will identify the position of $d_{1b}$ to complete the proof.

\begin{figure}[h]
\centering
\vspace*{-1cm}
\includegraphics[scale=0.6]{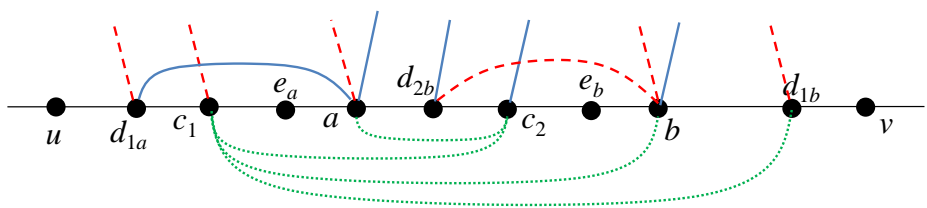}
\vspace*{-8.9cm}
\caption{Centers in and out: Detailed embedding.}
\vspace*{-0.2cm}
\label{fig:in-out2}
\end{figure}

\begin{claim}
Node $c_1$ is in the interval $(u,v)$.
\end{claim}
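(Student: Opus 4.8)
The plan is to argue by contradiction, so suppose $c_1$ lies outside $(u,v)$. By reversing the order of the line if necessary (this preserves conditions (1)--(3), as they refer to the nodes $1,2$ and to ``exiting $(u,v)$'' rather than to positions), I may assume that $c_1$ is to the left of $u$, and hence to the left of $a$ since $a,b\in(u,v)$. The key point I would extract immediately is that this forces a \emph{third} monochromatic path between $a$ and $b$: because $c_1$ is outside $(u,v)$ while $a,b\in(u,v)$, both edges $(c_1,a)$ and $(c_1,b)$ exit $(u,v)$, and since neither is incident to $1$ or $2$, condition (3) colors both of them with color $3$. Thus $a - c_1 - b$ is a color-$3$ path, and together with the color-$1$ path $a - 1 - b$ and the color-$2$ path $a - 2 - b$ (whose edges are colored $1$ and $2$ by condition (2)) we obtain paths of all three colors joining $a$ and $b$.

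Next I would feed this into Proposition \ref{closed}. By parts (2) and (3), the only edges that may exit $(a,b)$ are those incident to a node of $\{a,b,1,2,c_1\}$, and every connected component of $G$ with these five nodes deleted lies entirely inside or entirely outside $(a,b)$. Since $c_2\in(a,b)$ (established just before the claim, together with the fact that $(c_1,c_2)$ has color $3$) and $c_2$ is none of the five path nodes, the whole component of $c_2$ in $G\setminus\{a,b,1,2,c_1\}$ is trapped inside $(a,b)$; this component contains $d_{2a}$, $d_{2b}$, and all the stellation vertices of the faces incident to $c_2$. So these vertices can reach $1$, $2$, $c_1$ only through edges of color $1$, $2$, $3$ respectively, and every other outgoing edge would violate the seal.

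It remains to turn this over-constrained picture into a concrete conflict, and here I would follow the style of Lemmas \ref{lem:inside} and \ref{lem:outside} and produce a vertex with ``no legal position.'' I would track the color-$1$ fan emanating from $1$ and the color-$3$ fan emanating from $c_1$, both reaching into $(a,b)$, together with the color-$2$ edges from $2$; these pin down the admissible positions of $d_{1a}$ and $d_{1b}$ (each adjacent to $1$, to $c_1$, and to one of $a,b$) and of the stellation vertices of the triangles $(1,a,c_1)$ and $(1,b,c_1)$. The goal is to show that, once $c_2$ and its entire component are forced inside $(a,b)$, there is no subinterval in which one of these vertices can sit with all three of its incident edges avoiding a monochromatic crossing, contradicting the assumed embedding and forcing $c_1\in(u,v)$.

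The hard part will be exactly this last step: the preceding two paragraphs are clean because the forced color-$3$ path $a - c_1 - b$ lets Proposition \ref{closed} do all the structural work, but converting the resulting rigidity into a genuine same-color crossing requires the position case analysis --- where $c_1$ sits relative to $1$, and on which side of $(a,b)$ the vertices around $d_{1a}$ and $d_{1b}$ are forced --- and then isolating the single unembeddable vertex. I expect that a suitable copy of $Q_1$ inside $Q_2$ (for instance the one with outer terminals $1,c_2$ and inner terminals $a,c_1$) makes the bookkeeping for this final conflict manageable.
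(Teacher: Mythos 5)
Your first two paragraphs match the paper's argument exactly: condition (3) forces both $(c_1,a)$ and $(c_1,b)$ to have color $3$, giving monochromatic $a$--$b$ paths in all three colors, and Proposition~\ref{closed} then seals the interval $(a,b)$ so that the entire connected interior of the cycle $(a,c_1,b,2)$ --- which contains $c_2$, $e_a$, $e_b$, $d_{2a}$, $d_{2b}$ and their stellation vertices --- is trapped inside $(a,b)$. That part is correct and is precisely how the proof begins.

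The gap is the final step, which you explicitly leave as a plan rather than carry out, and the direction you gesture at is not the one that closes the argument. You propose to chase $d_{1a}$ and $d_{1b}$ and the copy of $Q_1$ with outer terminals $1,c_2$ and inner terminals $a,c_1$; but $d_{1a},d_{1b}$ lie in the faces on the node-$1$ side of the cycle $(a,c_1,b,2)$, i.e.\ \emph{outside} the sealed region, and with $c_1$ outside $(u,v)$ and $c_2$ deep inside $(a,b)$ the hypotheses of Lemma~\ref{lem:inside} for that copy (inner terminals between the outer terminals, centers between the inner terminals, uniform colors from each outer terminal) are not readily verified. What actually works is to stay inside the sealed region: the subgraph bounded by $(a,c_1,c_2,2)$ is a copy of $Q_1$ with \emph{outer} terminals $c_1,2$ and \emph{inner} terminals $a,c_2$ (adjacent, lying in the same $(c_1,2)$ arc, reachable from $c_1$ only with color $3$ and from $2$ only with color $2$), so Lemma~\ref{lem:inside} forces one of its centers $e_a,d_{2a}$ out of $[a,c_2]$ and hence into $(c_2,b)$, with its edge to $a$ forced to color $1$ by the conflicts with $(c_1,c_2)$ and $(2,c_2)$. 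The symmetric copy bounded by $(b,c_1,c_2,2)$ forces one of $e_b,d_{2b}$ into $(a,c_2)$ with a color-$1$ edge to $b$. These two color-$1$ edges cross, which is the contradiction. Without this (or an equivalent) concrete conflict, the proof is incomplete.
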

\begin{proof}
Suppose that $c_1$ is outside $(u,v)$. 
Then both edges $(c_1,a), (c_1,b)$ have color 3
by condition (3).
%because of the color-1 path $u,1,v$ and the color-2 path $u,2,v$.
Since $a$ and $b$ are connected by paths $(a,1,b)$, $(a,2,b)$,
$(a,c_1,b)$ of all three colors, no edge can exit the
interval $(a,b)$ to any node other than 1, 2, $c_1$.
Since $c_2$ is inside the interval $(a,b)$, it follows that
all the nodes of $Q_2$ in the interior of the graph bounded
by the cycle $(a,c_1, b, 2)$ (which is connected) must be inside the interval $(a,b)$. Note that the subgraph bounded
by the cycle $(a, c_1, c_2, 2)$ contains $Q_1$ with $c_1, 2$ as the outer terminals and $a, c_2$ as the inner terminals,
which are adjacent.
The inner terminals $a, c_2$ lie in the same arc $(c_1, 2)$
(in a cyclic ordering),
all edges from $c_1$ to the interval $[a,c_2]$ must be colored 3
and all edges from 2 to $[a,c_2]$ must be colored 2.
By Lemma \ref{lem:inside}, at least one of the
two centers $e_a, d_{2a}$ must be outside the interval $[a,c_2]$.
Hence it must be in the interval $(c_2,b)$, and the edge connecting it
to node $a$ must be colored 1 because it conflicts with the edges
$(c_1,c_2)$ and $(2,c_2)$.
Similarly, the subgraph of $Q_2$ bounded by the cycle
$(b,c_1, c_2, 2)$ contains $Q_1$ with $c_1, 2$ as the outer terminals and $b, c_2$ as the inner terminals,
which are adjacent. By a symmetric argument,
at least one of the two centers $e_b, d_{2b}$ must be in the
interval $(a,c_2)$, and the edge that connects it to $b$ is also colored 1. Thus, there are two color-1 edges that intersect, a contradiction.
We conclude that $c_1$ is in the interval $(u,v)$.
\end{proof}

Since $c_1$ is outside the interval $(a,b)$ (see the beginning of the proof) and is inside the interval $(u,v)$ (by Claim 1), it follows
that $c_1$ is either in the interval $(u,a)$ or in $(b,v)$.
Assume without loss of generality that $c_1$ is in the interval $(u,a)$ 
as in Fig. \ref{fig:in-out}.

\begin{claim}
Node $e_b$ is in the interval $(c_2,b)$ and node $d_{2b}$ is in the interval $(a,c_2)$.
Edge $(b,d_{2b})$ has color 1 and edge $(a,c_2)$ has color 3.
\end{claim}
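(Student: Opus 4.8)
The plan is to zoom in on the copy of $Q_1$ that was already identified in the proof of Claim 1: the one bounded by the $4$-cycle $(b,c_1,c_2,2)$, with outer terminals $c_1,2$, inner terminals $c_2,b$ (which are adjacent, so it is a $Q_1+ab$), and centers $e_b,d_{2b}$. I will first pin down the colors of the edges leaving $c_1$, which turns the global hypotheses into exactly the hypotheses of the gadget lemmas for this copy, and then locate its two centers by combining Lemma~\ref{lem:inside} with a direct reachability analysis.

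For the colors: since $c_1\in(u,a)$ and $c_2\in(a,b)$, the edge $(c_1,b)$ conflicts with the color-$1$ edge $(1,a)$ (as $1<c_1<a<b$) and with the color-$2$ edge $(2,c_2)$ (as $c_1<c_2<b<2$), so $(c_1,b)$ must have color $3$. Together with the edge $(c_1,c_2)$, already shown to have color $3$, this means every edge from $c_1$ into $[c_2,b]$ is colored $3$, while every edge from $2$ into $[c_2,b]$ is colored $2$ by condition~(2). These are precisely the conditions under which Lemma~\ref{lem:inside} applies to the copy above (with the roles of colors $1,3$ interchanged).

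To locate the centers I would argue as follows. The three edges $(1,b),(2,b),(c_1,b)$ of colors $1,2,3$ incident to $b$ act as a wall: combined with condition~(3), they forbid any node lying in $(c_1,b)$ — in particular either center, each of which is adjacent to $c_2\in(c_1,b)$ and to $b$ — from reaching any point to the right of $b$. So both centers lie to the left of $b$. Applying Lemma~\ref{lem:inside} to the copy rules out the possibility that both $e_b,d_{2b}$ lie in $(c_2,b)$, so at least one center is outside $(c_2,b)$, hence in $(c_1,c_2)$. A center placed in $(c_1,a)$ would force its edge to $b$ to be color $1$ (after its clashes with the color-$3$ edge $(c_1,c_2)$ and the color-$2$ edge $(2,c_2)$), which then conflicts with the color-$1$ edge $(1,a)$; so the outside center in fact lands in $(a,c_2)$. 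Identifying it as $d_{2b}$ and the inside one as $e_b$ gives $d_{2b}\in(a,c_2)$ and $e_b\in(c_2,b)$.

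Once $d_{2b}\in(a,c_2)$ is established, the two colors drop out cleanly: the edge $(b,d_{2b})$ conflicts with the color-$2$ edge $(2,c_2)$ and the color-$3$ edge $(c_1,c_2)$, so it is color $1$; and then $(a,c_2)$ conflicts with this color-$1$ edge $(b,d_{2b})$ and with the color-$2$ edge $(2,d_{2b})$, so it is color $3$. I expect the main obstacle to be the position-pinning in the previous paragraph — specifically excluding the configuration in which \emph{both} centers lie outside $(c_2,b)$, and the swapped orientation in which $d_{2b}$ is inside and $e_b$ outside. These are exactly the cases that Lemma~\ref{lem:outside} would dispatch, but that lemma is \emph{not} available here, because node $1$ sends a color-$1$ edge into the interval to $b$, violating its condition~(4). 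Hence these remaining configurations must be excluded by hand, descending into the twice-stellated interior of this copy and reasoning as in the proofs of Lemmas~\ref{lem:inside} and~\ref{lem:outside}, which is the delicate heart of the claim.
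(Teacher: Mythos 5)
Your skeleton matches the paper's: establish that edges from $c_1$ into $[c_2,b]$ are all color~3 and from $2$ are all color~2, confine both centers $e_b,d_{2b}$ to $(a,b)$, invoke Lemma~\ref{lem:inside} on the copy of $Q_1$ bounded by $(c_1,c_2,2,b)$ to forbid both centers from lying in $(c_2,b)$, and then read off the colors of $(b,d_{2b})$ and $(a,c_2)$ (those two final color deductions are correct and identical to the paper's). But the decisive step --- ruling out the configuration in which $e_b$ is the center lying in $(a,c_2)$, which subsumes both your ``both centers outside $(c_2,b)$'' case and your ``swapped orientation'' case --- is exactly the part you leave as an acknowledged hole, so the proof is not complete. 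The paper closes it with a short argument you did not find, and which does not go in the direction you propose: if $e_b\in(a,c_2)$, then $(e_b,b)$ is forced to color~1 (it crosses $(2,c_2)$ and $(c_1,c_2)$) and $(c_1,e_b)$ to color~3, so $(a,c_2)$, crossing both, is forced to color~2; this leaves no legal position for $d_{2a}$, the center of the \emph{adjacent} triangle $(2,a,c_2)$, since $c_2$ is then walled inside $(a,b)$ by paths of all three colors, node $2$ cannot reach $(a,c_2)$ with color~2, and $a$ cannot reach $(c_2,b)$. The symmetry between $e_b$ and $d_{2b}$ is broken precisely because $e_b$ is adjacent to $c_1$ while $d_{2b}$ is adjacent to $2$, and the contradiction lives in a neighboring face of $Q_2$, not in the twice-stellated interior of the $(b,c_1,c_2,2)$ region where you propose to look.

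Two smaller points. First, your inference ``at least one center is outside $(c_2,b)$, hence in $(c_1,c_2)$'' skips positions left of $c_1$ (and outside $(u,v)$); these are easy to kill --- an edge from such a node to $b$ would cross $(1,a)$, $(2,a)$ and $(c_1,c_2)$, one of each color --- but the step should be stated, and the paper instead gets $e_b,d_{2b}\in(a,b)$ directly from the facts that $c_2$ cannot reach outside $(c_1,b)$ and $b$ cannot reach $(c_1,a)$. Second, your ``wall at $b$'' only bounds the centers on one side, so on its own it does not confine them to $(a,b)$; you need the constraint coming from adjacency to $c_2$ as well, which you mention only in passing.
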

\begin{proof}
Nodes $e_b$, $d_{2b}$ are both adjacent to nodes $c_2$ and $b$.
Node $c_2$ cannot reach outside the interval $(c_1,b)$ because
of the color-1 path $(a,1,b)$, color-2 path $(a,2,b)$ and the color-3 edge $(c_1,b)$.
Node $b$ cannot reach the interval $(c_1,a)$ because of the edges $(1,a), (2,b)$ and
$(c_1,c_2)$. Hence $e_b$ and $d_{2b}$ must be in the interval $(a,b)$.

Suppose that $e_b$ is in $(a,c_2)$, to derive a contradiction.
Then the edge $(e_b,b)$ must have color 1 and
$(c_1,e_b)$ color 3, hence the edge $(a,c_2)$ must have color 2.
Then there is no legal position for node $d_{2a}$: 
$c_2$ cannot reach outside the interval $(a,b)$ (because of the color-1 path $(a,1,b)$,
the color-2 path $(a,2,b)$ and the color-3 path $(e_b,c_1,b)$),
node 2 cannot reach the interval $(a,c_2)$ with color 2,
and node $a$ cannot reach the interval $(c_2,b)$ (because of
the color-1 edge $(e_b,b)$, the color-2 edge $(c_2,2)$ and the color-3 edge
$(c_1,e_b)$).
We conclude that $e_b$ is in interval $(c_2,b)$.

The subgraph of $Q_2$ bounded by the cycle $(c_1, c_2, 2, b)$ contains a copy of $Q_1$
with $c_1, 2$ as the outer terminals and $c_2, b$ as the inner terminals.
The inner terminals $c_2, b$ are adjacent, they lie in the same ($c_1,2$) arc
(in the cyclic order),
node $c_1$ can reach the interval $[c_2,b]$ only with color 3 and node $2$
can reach $[c_2,b]$ only with color 2. The conditions of Lemma \ref{lem:inside}
are satisfied, therefore the interval $(c_2,b)$ cannot contain
both centers $e_b, d_{2b}$.
Since $e_b$ is in the interval $(c_2,b)$, the other center $d_{2b}$ is not,
thus it is in the interval $(a,c_2)$.
The edge $(b,d_{2b})$ must have color 1 (because of the conflicting edges
$(2,c_2)$ and $(c_1,c_2)$), and the edge $(2,d_{2b})$ has color 2,
therefore the edge $(a,c_2)$ must have color 3.
\end{proof}

\begin{claim}
Node $e_a$ is in the interval $(c_1,a)$. Node $d_{1a}$ is in interval $(u,c_1)$.
Edge $(a,d_{1a})$ has color 2.
\end{claim}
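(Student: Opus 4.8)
The plan is to mirror the proof of the preceding claim, exploiting that $e_a$ and $d_{1a}$ are precisely the two centers of the copy of $Q_1$ inside $Q_2$ with outer terminals $1,c_2$ and inner terminals $a,c_1$: both are adjacent to $a$ and to $c_1$, while $d_{1a}$ is adjacent to the outer terminal $1$ and $e_a$ to the outer terminal $c_2$. The only structural difference from the situation of the previous claim is that here $c_1$ lies \emph{outside} $(a,b)$ (in $(u,a)$), whereas there $c_2$ lay inside $(a,b)$; consequently the two centers will straddle $c_1$ rather than both landing in the inner interval, and the statement cannot be obtained from the previous claim by symmetry. Throughout I use the three monochromatic paths joining $a$ and $b$ that are already available: the color-1 path $a,1,b$, the color-2 path $a,2,b$, and the color-3 path $a,c_2,c_1,b$ (its edges $(a,c_2),(c_2,c_1),(c_1,b)$ are all color 3 by the earlier parts of the proof).

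First I would confine both $e_a$ and $d_{1a}$ to the left of $a$, i.e.\ to the interval $(u,a)$. For any node $w$ that is adjacent to both $c_1$ and $a$ and lies to the right of $a$, the edge $(c_1,w)$ crosses $a$ and therefore conflicts with the color-1 edge $(1,a)$ and the color-2 edge $(2,a)$, so it must be color 3; this is impossible when $w\in(a,c_2)$ because of the color-3 edge $(a,c_2)$, and the remaining positions $w>c_2$ are ruled out because then the edge $(a,w)$ is forced to color 1 and clashes with the color-1 edge $(b,d_{2b})$ obtained in the previous claim (for $w>b$ the edge $(a,w)$ in fact conflicts with all three colors). The same argument, together with Proposition~\ref{closed} applied to the three paths joining $a$ and $b$, keeps the whole ``corner'' of $Q_2$ at $a,c_1$ on the left, so $e_a,d_{1a}\in(u,a)$.

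Next I would pin $e_a$ to $(c_1,a)$ and then push $d_{1a}$ out to $(u,c_1)$. If $e_a$ were in $(u,c_1)$, the edge $(c_2,e_a)$ would cross both $c_1$ and $a$ and so conflict with the color-1 edge $(1,a)$, the color-2 edge $(2,a)$, and the color-3 edge $(c_1,b)$, leaving it no legal color; hence $e_a\in(c_1,a)$. Now I apply Lemma~\ref{lem:inside} to the copy of $Q_1$ with outer terminals $1,c_2$ and inner terminals $a,c_1$: its hypotheses hold, since $a,c_1\in(1,c_2)$, all edges from $1$ into $[c_1,a]$ are color 1, and all edges from $c_2$ into $[c_1,a]$ are color 3 (including $(c_2,e_a)$, which is color 3 once $e_a\in(c_1,a)$ by the conflict just used). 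Lemma~\ref{lem:inside} then forbids both centers from lying in $(c_1,a)$; since $e_a$ does, we get $d_{1a}\in(u,c_1)$. Finally, with $d_{1a}<c_1<a$, the edge $(a,d_{1a})$ strictly contains $c_1$, so it conflicts with the color-1 edge $(1,c_1)$ and the color-3 edge $(c_1,c_2)$; therefore $(a,d_{1a})$ has color 2, completing the claim.

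The hard part is the first step: Proposition~\ref{closed} only tells me that each corner component lies entirely inside or entirely outside $(a,b)$, so excluding ``inside $(a,b)$'' and ``right of $b$'' still requires the explicit color-exhaustion of the type used in Lemmas~\ref{lem:inside} and~\ref{lem:outside}, and it is exactly here that I must lean on the previously fixed color-3 edges at $c_2$ and the color-1 edge $(b,d_{2b})$ rather than on symmetry with the previous claim.
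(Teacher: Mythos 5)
Your proof is correct and follows essentially the same route as the paper's: pin $e_a$ to $(c_1,a)$ by color exhaustion against the already-established edges $(a,c_2)$, $(c_1,c_2)$, $(c_1,b)$ and $(b,d_{2b})$, then invoke Lemma~\ref{lem:inside} on the copy of $Q_1$ with outer terminals $1,c_2$ and inner terminals $a,c_1$ to expel $d_{1a}$ from $(c_1,a)$ and hence into $(u,c_1)$, and finally read off the color of $(a,d_{1a})$ from its conflicts with $(1,c_1)$ and $(c_1,c_2)$. The only differences are cosmetic: you confine both centers to $(u,a)$ before separating them, whereas the paper locates $e_a$ directly and places $d_{1a}$ afterwards; note only that your case analysis for ``right of $a$'' should be supplemented by the one-line observation that positions left of $u$ (or outside $(u,v)$ altogether) are excluded by condition (3) of the lemma together with the color-3 edge $(c_1,c_2)$.
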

\begin{proof}
Node $e_a$ is adjacent to nodes $a, c_1, c_2$.
Node $c_2$ can only reach the interval $(c_1,b)$. 
Node $a$ cannot reach the subinterval $(c_2,b)$
(because of the color-1 edge $(d_{2b},b)$, the color-2 edge $(c_2,2)$ 
and the color-3 edge $(c_1,c_2)$)
and node $c_1$ cannot reach the subinterval $(a,c_2)$
(because of the edges $(a,1), (a,2), (a,c_2)$).
Therefore, $e_a$ is in the interval $(c_1,a)$.

The subgraph of $Q_2$ bounded by the cycle $(1, a, c_2, c_1)$
contains a copy of $Q_1$ with outer terminals $1, c_2$, 
and inner terminals $a, c_1$. The inner terminals are adjacent
and are in the same $(1,c_2)$ arc. Furthermore, all edges
from $1$ to the interval $[c_1,a]$ are colored 1, and
all edges from $c_2$ to $[c_1,a]$ must be colored 3.
The conditions of Lemma \ref{lem:inside} are satisfied,
therefore we cannot have both centers $e_a, d_{1a}$ in the interval
$[c_1,a]$. Since $e_a$ is in the interval, it follows that
$d_{1a}$ must be outside the interval $[c_1,a]$.
Node $a$ cannot reach outside the interval $(u,c_2)$
(because of the color-1 path $(c_1, 1, b, d_{2b})$, the color-2 edge $(c_2,2)$
and the the color-3 edge $(c_1,c_2)$),
and node $c_1$ cannot reach the interval $(a,c_2)$.
Therefore, $d_{1a}$ must be in the interval $(u,c_1)$.
It follows that the edge $(a,d_{1a})$ must have color 2,
since it conflicts with edges $(1,c_1)$ and $(c_1,c_2)$.
\end{proof}

\begin{claim}
Node $d_{1b}$ is in interval $(b,v)$.
Edge $(c_1,d_{1b})$ is colored 3.
\end{claim}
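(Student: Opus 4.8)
\emph{Proof proposal.} The plan is to first locate $d_{1b}$ (which is adjacent to $1$, $b$ and $c_1$) to the right of $b$, inside $(u,v)$, and only then to colour the edge $(c_1,d_{1b})$; the colouring step is immediate once the position is known. Throughout I would use the facts already established in Claims~1--3: $c_1\in(u,a)$, $d_{1a}\in(u,c_1)$, $e_a\in(c_1,a)$, $d_{2b}\in(a,c_2)$, $e_b\in(c_2,b)$, together with the colour-$3$ edges $(c_1,c_2),(c_1,b),(a,c_2)$, the colour-$1$ edge $(b,d_{2b})$ and the colour-$2$ edge $(a,d_{1a})$. A useful preliminary remark is that $a$ and $b$ are joined by monochromatic paths of all three colours, namely $(a,1,b)$, $(a,2,b)$ and the colour-$3$ path $(a,c_2,c_1,b)$, so that Proposition~\ref{closed}(2),(3) applies to the interval $(a,b)$.

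First I would exclude $d_{1b}$ from $(a,b)$. Since $d_{1b}$ is adjacent to $1$, if it lay in $(u,v)$ the edge $(1,d_{1b})$ would be forced to colour $1$; placing $d_{1b}$ in $(d_{2b},b)$ makes this edge conflict with the colour-$1$ edge $(b,d_{2b})$, while placing it in $(a,d_{2b})$ forces $(c_1,d_{1b})$ to colour $3$ (it conflicts with the colour-$1$ edge $(1,a)$ and the colour-$2$ edge $(2,a)$), which then conflicts with the colour-$3$ edge $(a,c_2)$. Either way a contradiction results, so $d_{1b}\notin(a,b)$. Next I would rule out the subinterval $(c_1,a)$: there the edge $(b,d_{1b})$ spans over $a$, hence conflicts with $(1,a)$ and $(2,a)$ and must be colour $3$, and this conflicts with the colour-$3$ edge $(c_1,c_2)$. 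Thus $d_{1b}$ lies either to the left of $c_1$ or to the right of $b$.

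The main obstacle is ruling out the left region $(u,c_1)$ (and, on the other side, the region beyond $v$); here the simple pairwise conflicts above are insufficient, and one must invoke the richer three-colour structure together with the fact that the triangles of $Q_2$ are stellated twice more. To eliminate $(u,c_1)$ I would try to protect an interval around the left part: using the colour-$1$ path $(a,1,c_1)$, the colour-$3$ path $(a,c_2,c_1)$, and a colour-$2$ path through $d_{1a}$ (via the edge $(a,d_{1a})$), Proposition~\ref{closed}(2) forbids any edge from exiting the protected interval except through its path nodes, and since $b$ is not among them the edge $(b,d_{1b})$ prevents $d_{1b}$ from lying there. The delicate point is that the colour of $(d_{1a},c_1)$ is \emph{not} determined by the edges listed above, so one has to split on this colour; in the remaining case and for the sliver $(u,d_{1a})$ the elementary conflict/reach arguments fail, and one must instead appeal to the copies of $Q_1$ furnished by the double stellation and to Lemma~\ref{lem:inside}, exactly in the style of Claims~2 and~3. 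The region beyond $v$ is treated by an analogous argument, now using the hypothesis that edges exiting $(u,v)$ and not incident to $1,2$ are coloured $3$. I expect essentially all of the work of the claim to lie in this left/right separation.

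Finally, once $d_{1b}$ has been placed in $(b,v)$ we have $c_1<a<b<d_{1b}$, so the edge $(c_1,d_{1b})$ runs over the whole interval $(a,b)$ with its two endpoints on opposite sides. It therefore conflicts with the colour-$1$ edge $(1,b)$ and with the colour-$2$ edge $(2,a)$, leaving colour $3$ as the only option. This shows $(c_1,d_{1b})$ has colour $3$ and completes the claim.
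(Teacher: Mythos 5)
Your handling of the easy regions is fine and matches the paper: excluding $(a,b)$ and $(c_1,a)$ by pairwise conflicts with $(b,d_{2b})$, $(a,c_2)$, $(c_1,c_2)$, and the final colouring of $(c_1,d_{1b})$ once $d_{1b}\in(b,v)$ is established. But the part you yourself identify as carrying ``essentially all of the work'' --- excluding $(u,c_1)$ and the exterior of $(u,v)$ --- rests on an argument that does not work, and the fallback is too vague to count as a proof. The ``protection'' idea fails for the following reason: three monochromatic paths of all colours between $a$ and $c_1$ (via $1$, via $c_2$, and via $d_{1a}$ if $(d_{1a},c_1)$ happens to be colour $2$) only forbid edges that \emph{exit} the interval $(c_1,a)$, i.e.\ edges with one endpoint strictly inside and one outside. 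If $d_{1b}$ sits in $(u,c_1)$, the edge $(b,d_{1b})$ has \emph{both} endpoints outside $[c_1,a]$ --- it passes over that interval without exiting it --- so Proposition~\ref{closed}(2) says nothing about it. Equivalently, on the circle $b$ and $(u,c_1)$ lie on the same arc determined by $a$ and $c_1$, so the separation argument cannot see the edge $(b,d_{1b})$. Splitting on the colour of $(d_{1a},c_1)$ does not repair this.

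The idea you are missing is the one the paper actually uses: in the cases $d_{1b}\in(u,d_{1a})$ and $d_{1b}$ outside $(u,v)$, both edges $(d_{1b},c_1)$ and $(d_{1b},b)$ are forced to colour $3$ (by condition~(3), or by the colour-$1$ edge $(1,d_{1a})$ and the colour-$2$ edges $(d_{1a},a),(2,a)$), and then the \emph{centre $f$ of the stellated triangle $(b,c_1,d_{1b})$} has no legal position --- a four-way case analysis on where $f$ could sit, each case producing a conflict with one of those two colour-$3$ edges or with $(c_1,b)$. You never mention this triangle or its centre, and without it these regions are not eliminated. For the remaining sliver $(d_{1a},c_1)$ the paper forces $(d_{1a},c_1)$ to colour $2$, invokes the specific copy of $Q_1$ with outer terminals $c_2,d_{1a}$ and inner terminals $a,c_1$ supplied by the double stellation, applies Lemma~\ref{lem:inside} to push the centre $g$ of $(d_{1a},c_1,a)$ out of $(c_1,a)$, and then shows $g$ has nowhere to go. Your appeal to ``copies of $Q_1$ \ldots{} in the style of Claims~2 and~3'' gestures in this direction but identifies neither the copy nor the node that becomes unplaceable, so the core of the claim remains unproved.
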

\begin{proof}
Node $d_{1b}$ is adjacent to $b, 1, c_1$.
Node $b$ cannot reach the interval $(c_1,a)$,
node $c_1$ cannot reach the interval $(a,c_2)$ and
1 cannot reach $(c_2,b)$ (because of the edge $(d_{2b},b)$).
Therefore $d_{1b}$ lies outside the interval $(c_1,b)$.
First, we claim that it must be in the interval $(d_{1a},v)$.
Suppose to the contrary that it lies outside the interval $(d_{1a},v)$
(possibly even outside the interval $(u,v)$).
Then it is easy to see that both edges $(d_{1b},c_1)$ and $(d_{1b},b)$ must have color 3:
If $d_{1b}$ is outside the interval $(u,v)$ then this holds
because of condition (3), and if $d_{1b}$ is in $(u,d_{1a})$ this holds
because of the color-1 edge $(1,d_{1a})$ and the color-2 edges
$(d_{1a},a)$ and $(2,a)$.
Then there is no legal position to place the center $f$ of the
triangle $(b,c_1,d_{1b})$: If $f$ is outside $(u,v)$
then $(f,b), (f,c_1)$ must both have color 3 and 
one of them intersects one of $(d_{1b},c_1)$, $(d_{1b},b)$. 
Similarly, if $f$ is in $(u,c_1)$, the edge $(f,b)$ must have color 3
and intersects $(d_{1b},c_1)$. If $f$ is in the interval $(c_1,b)$ then
$(d_{1b},f)$ must be colored 3 (by condition (3) or
because of the edges $(1,d_{1a}), (d_{1a},a),  (2,a)$) 
and it intersects the edge $(c_1,b)$. 
If $f$ is in the interval $(b,v)$ then $(c_1,f)$ must have color 3
(because of the edges $(1,a), (2,a)$) and intersects the edge $(b,d_{1b})$.
We conclude that $d_{1b}$ is in the interval $(d_{1a},v)$.
Hence it is either in $(d_{1a},c_1)$ or in $(b,v)$.

Suppose that $d_{1b}$ is in the interval $(d_{1a},c_1)$,
to derive a contradiction.
The edge $(d_{1b},1)$ is colored 1 and the edge $(d_{1b},b)$
must be colored 3, hence the edge $(d_{1a},c_1)$ must be colored 2.
Consider the subgraph of $Q_2$ bounded by the cycle
$(a, c_2, c_1, d_{1a})$ and recall that all the triangles
in Figure \ref{fig:q2} are stellated twice. 
The subgraph contains a copy of $Q_1$
with $c_2, d_{1a}$ as the outer terminals,
$a, c_1$ as the inner terminals, which are adjacent,
and are embedded inside the interval $(d_{1a},c_2)$.
Node $d_{1a}$ can reach the interval $[a,c_1]$ only with color 2
(because of the color-1 edge $(d_{1b},1)$ and the color-3
edge $(d_{1b},b)$), and $c_2$ can reach $[a,c_1]$ only with color 3.
The center $e_a$ of the triangle $(c_2, c_1, a)$ is inside the
interval $(a,c_1)$.
Hence, by Lemma \ref{lem:inside}, the center $g$ of the
other triangle $(d_{1a}, c_1, a)$ cannot be in the
interval $(c_1,a)$.
This leaves no possible position for the center $g$ of $(d_{1a}, c_1, a)$:
Node $c_1$ cannot reach outside $(u,v)$ or left of $d_{1a}$
(because of the edges $(d_{1b},1), (d_{1a},a), (d_{1b},b)$ and condition 3),
node $a$ cannot reach inside the interval $(d_{1a},c_1)$ or $(b,v)$,   
and $d_{1a}$ cannot reach the interval $(a,b)$.

We conclude that $d_{1b}$ is not in the interval $(d_{1a},c_1)$,
hence it must be in the interval $(b,v)$.
The edge $(c_1,d_{1b})$ must have color 3 because
it conflicts with the edges $(a,1), (a,2)$.
\end{proof}

This completes the proof of the lemma.
\end{proof}

Let $Q$ be the graph formed by taking 15 copies of $Q_2$,
identifying their outer terminals 1,2, and identifying terminal $b$
of the $i$-th copy with terminal $a$ of the $(i+1)$-th copy;
i.e., $Q$ is formed by glueing together back-to-back 15 copies of $Q_2$
with the same outer terminals, see Figure \ref{fig:q}. 
We call $Q$ a {\em quad}, nodes 1,2 the outer terminals of quad $Q$
and call the inner terminals $a_1, a_2, \ldots, a_{16}$ of the copies of $Q_2$
the inner terminals of $Q$.
We use $Q_2^i$ to denote the $i$th copy of $Q_2$.
Let $Q+12$ denote the graph consisting of $Q$ and the edge $(1,2)$.

\begin{figure}[h]
\centering
\vspace*{-1.2cm}
\includegraphics[scale=0.6]{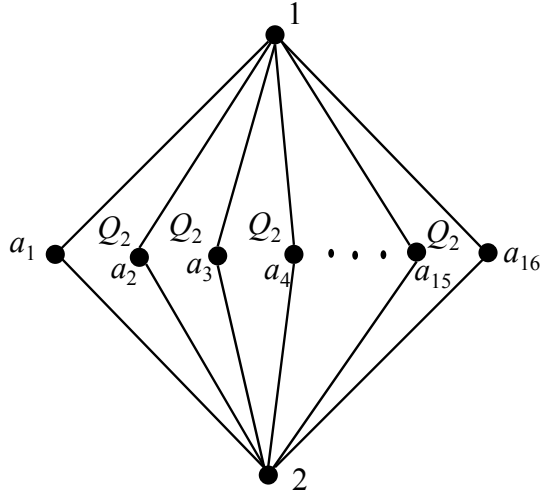}
\vspace*{-3.8cm}
\caption{The quad $Q$.}
\vspace*{-0.2cm}
\label{fig:q}
\end{figure}

\begin{lemma}\label{lem:quad}
There is no embedding of $Q+12$ in three pages such that
all the inner terminals are embedded in a subinterval of the interval $(1,2)$,
and all edges from nodes 1 and 2 to this subinterval use only two (the same two) colors (i.e., one of the three colors is not used by any
edge connecting nodes 1 and 2 to this subinterval).
\end{lemma}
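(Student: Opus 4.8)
The plan is to assume such a 3-page embedding exists and derive a contradiction. After normalizing so that the two colors used by the edges from $1,2$ into the subinterval are colors $1,2$ (none is colored $3$), and so that $1$ lies to the left and $2$ to the right of the subinterval, I first pin down the colors of the edges joining $1,2$ to the inner terminals. Writing $t_1<\cdots<t_{16}$ for the inner terminals in the order they appear on the line, observe that whenever $i<j$ the edge $(1,t_j)$ conflicts with the edge $(2,t_i)$; since both avoid color $3$, they must receive the two distinct colors $1,2$, so the color of $(1,t_j)$ differs from that of $(2,t_i)$ for all $i<j$. Instantiating $i=1$ and then $j=16$ forces, after relabeling, the edge to $1$ to be color $1$ and the edge to $2$ to be color $2$ for every terminal except the two extreme ones; thus $t_2,\dots,t_{15}$ are \emph{standard}.

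Next I isolate one clean copy of $Q_2$ to which Lemma~\ref{lem:in-out} applies. Since $t_2,t_{15}$ are standard, $(t_2,1,t_{15})$ is a monochromatic color-$1$ path and $(t_2,2,t_{15})$ a color-$2$ path. Proposition~\ref{closed}(1) applied to these shows that every color-$1$ edge exiting $(t_2,t_{15})$ is incident to $1$ and every color-$2$ edge exiting it is incident to $2$; together with the hypothesis that no edge from $1,2$ into the subinterval uses color $3$, this forces exactly conditions (2) and (3) of Lemma~\ref{lem:in-out} to hold for the interval $(u,v)=(t_2,t_{15})$. A pigeonhole argument then selects the copy: each of the four terminals $t_1,t_2,t_{15},t_{16}$ lies in at most two of the $15$ copies $Q_2^i$, so at least $15-8=7$ copies have both inner terminals among $t_3,\dots,t_{14}$, hence strictly inside $(t_2,t_{15})$ and standard. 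For any such copy the induced sub-embedding of $Q_2^i+12$ satisfies all three hypotheses of Lemma~\ref{lem:in-out} with subinterval $(t_2,t_{15})$.

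Lemma~\ref{lem:in-out} then rigidly determines each selected copy: one center sits inside the interval between its two inner terminals, the other just outside it (still within $(t_2,t_{15})$), and a fixed pattern of color-$3$ edges appears. In particular the two inner terminals are now joined by monochromatic paths of \emph{all three} colors (the color-$1$ and color-$2$ paths through $1$ and $2$, and the color-$3$ path $a\!-\!c_2\!-\!c_1\!-\!b$ through the centers), and one forced color-$3$ edge reaches from the farther terminal across the nearer one out to the external center. The endgame combines these copies in two complementary ways. On one hand, Proposition~\ref{closed}(3) applied to a copy's three-colored paths forces each connected component of the rest of $Q$ to lie wholly inside or wholly outside that copy's terminal interval; reconciling this with the actual left-to-right positions of the remaining inner terminals already contradicts the embedding unless the terminals appear on the line essentially in index order. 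On the other hand, in that remaining monotone case the forced left- (or right-) reaching color-$3$ edges of the selected copies all lie on page $3$ and so cannot cross, yet each such edge passes over its own nearer terminal; a further pigeonhole (at least $\lceil 7/2\rceil=4$ of the copies reach the same way) makes two of these color-$3$ edges interleave, the desired contradiction.

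The main obstacle is precisely this last combination step. Establishing conditions (2)--(3) for a single copy via Proposition~\ref{closed}(1) and invoking Lemma~\ref{lem:in-out} is routine once the colors of the terminal edges are fixed; the real work is organizing the forced color-$3$ edges of the several selected copies so that either a crossing on page $3$ or a violation of Proposition~\ref{closed}(3) is unavoidable. It is exactly to guarantee enough standard, interior copies of a common orientation for this argument that $15$ copies are used.
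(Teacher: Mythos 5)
Your first two steps are essentially the paper's: the conflict between $(1,t_j)$ and $(2,t_i)$ for $i<j$ pins the colors so that $t_2,\dots,t_{15}$ are standard, Proposition~\ref{closed}(1) then yields conditions (2)--(3) of Lemma~\ref{lem:in-out} on $(t_2,t_{15})$, and a pigeonhole produces several copies of $Q_2$ to which that lemma applies. The gap is the endgame, in two places. First, the claim that Proposition~\ref{closed}(3) forces the terminals to ``appear on the line essentially in index order'' is asserted rather than proved, and is not available in this generality: the copies touching $t_1,t_2,t_{15},t_{16}$ are uncontrolled, so nothing constrains where their terminals sit. Second, and more seriously, ``at least $4$ of the copies reach the same way'' does not make two forced color-$3$ edges interleave. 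For two \emph{non-adjacent} good copies with disjoint terminal intervals, Lemma~\ref{lem:in-out} gives no lower bound on how far the external center $c_1$ and the node $d_{1b}$ lie from $a$ and $b$; all of one copy's forced color-$3$ edges can sit in a small neighborhood of its own interval, disjoint from or nested under those of the other copy, with no crossing. So the claimed contradiction never materializes.

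The crossing is only forced for two \emph{consecutive} good copies, and this is exactly what your pigeonhole fails to deliver: $15-8=7$ good copies can be pairwise non-adjacent (the $8$ possibly-bad copies can interleave with them). The count has to be done on the $16$ terminal indices instead: the $12$ good indices split into at most $5$ runs, so some run has length at least $3$, giving copies $Q_2^i,Q_2^{i+1}$ with $a_i,a_{i+1},a_{i+2}$ all standard. For consecutive copies the long color-$3$ edge $e=(c_1,d_{1b})$ of each copy has both endpoints outside its own terminal interval \emph{and} cannot have an endpoint inside the neighbor's interval (no foreign edge may enter it), so $e^i$ and $e^{i+1}$ both span all of $a_i,a_{i+1},a_{i+2}$; being both color $3$ they are nested, and the inner one then conflicts with the outer copy's required color-$3$ edge from its external center to one of its own inner terminals (the edge $(c_1,b)$ in Figure~\ref{fig:in-out}). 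That is the contradiction, and no case split on the ``direction'' in which the copies reach is needed.
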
 
\begin{proof}
Consider a book embedding of the nodes of the quad $Q$
with all the inner terminals embedded
in a subinterval of the interval $(1,2)$ and edges from 1, 2 
to this subinterval using two colors. 
Let $u', u$ be the inner terminals closest to 1
and $v', v$ the inner terminals closest to 2; thus, if we assume wlog that
the embedding on the line starts with terminal 1, 
then $u',u$ are the first two inner terminals and $v, v'$ the last two.
The order of these nodes is $1, u', u, v, v', 2$.
Let 1 be the color of the edge $(1,v')$ and let 2 be the color of the edge $(2,u')$.
By the hypothesis, all edges from nodes 1 and 2 to the interval $(u',v')$
are colored 1 or 2. Since the edge $(2,u')$ is colored 2,
all edges from node 1 to the interval $[u,v]$
(including nodes $u, v$)
must be colored 1.
Similarly, since the edge $(1,v')$ is colored 1,
all edges from node 2 to the interval $[u,v]$ (including $u$ and $v$) must be colored 2.
Because of the color-1 path $(u,1,v)$ and the color-2 path $(u,2,v)$,
all other edges that exit the interval $(u,v)$ (and are not going to 1,2)
must be colored 3.
Thus, the conditions of Lemma \ref{lem:in-out} are satisfied for every
copy of $Q_2$ whose inner terminals are in the interval $(u,v)$.
Since there are 16 inner terminals, there are at least three consecutive
terminals $a_i, a_{i+1}, a_{i+2}$ that are not in the set $\{u,u',v,v' \}$,
i.e. that are embedded in the interval $(u,v)$.
Thus, there are two consecutive copies of $Q_2$, for which
Lemma \ref{lem:in-out} holds.

Consider any copy of $Q_2$ with inner terminals in the interval $(u,v)$.
Observe from the conclusion of Lemma \ref{lem:in-out} (see Fig. \ref{fig:in-out})
that the inner terminals $a,b$ are connected by paths of all 3 colors,
$(a,1,b), (a,2,b), (a, c_2, c_1, b)$, and thus no edge can exit the interval
$(a,b)$ unless it connects to $1, 2$, $c_1$ or $c_2$.
Second, observe that there are nodes of $Q_2$ outside the interval $(a,b)$
on both sides, e.g. the nodes $c_1, d_{1b}$.
Applying these observations to the $i$-th and $(i+1)$-th copy of $Q_2$,
tells us that $a_i$ cannot lie inside the interval $(a_{i+1}, a_{i+2})$,
and similarly $a_{i+2}$ cannot lie inside the interval $(a_i, a_{i+1})$.
For, suppose to the contrary that $a_i$ is in $(a_{i+1}, a_{i+2})$.
Since no edge of the $i$-th copy $Q_2^i$ can exit the interval $(a_{i+1}, a_{i+2})$,
all nodes of $Q_2^i$ must lie inside the interval $(a_{i+1}, a_{i+2})$,
contradicting the fact that some nodes must lie on both sides outside $(a_i,a_{i+1})$. 
Therefore, $a_i$ is outside the interval $(a_{i+1}, a_{i+2})$.
Similarly $a_{i+2}$ is outside the interval $(a_i, a_{i+1})$.

Thus, we may assume without loss of generality that the nodes $a_i, a_{i+1}, a_{i+2}$
appear in this order.
Observe from Lemma \ref{lem:in-out} for each of copy of $Q_2$ that
there is a color-3 edge $e$ (edge $(c_1,d_{1b})$ in Fig. \ref{fig:in-out}) that connects
two nodes of $Q_2$ that lie outside, and on both sides, of the interval $(a,b)$
and that furthermore, one of these two nodes of the edge $e$ 
(node $c_1$ in Fig. \ref{fig:in-out})
has a color-3 edge to an inner terminal ($b$ in Fig. \ref{fig:in-out}).
Let $e^i$ be the edge $e$ for $Q_2^i$, and
$ e^{i+1}$ for $Q_2^{i+1}$.
The left endpoint of $e^i$ is left of $a_i$ 
and the right endpoint is right of $a_{i+1}$, and since it cannot be
in the interval $(a_{i+1}, a_{i+2})$ (no edge that does not belong to $Q_2^{i+1}$ can exit this interval unless it goes to 1 or 2), it must be right of $a_{i+2}$.
Similarly, the left endpoint of $e^{i+1}$ must be left of $a_i$
and the right endpoint right of $a_{i+2}$.
Since the edges $e^i, e^{i+1}$ both have color 3, they are nested.
Suppose without loss of generality that $e^i$ is nested inside $e^{i+1}$.
Then neither endpoint of $e^{i+1}$ can have a color-3 edge to 
$a_{i+1}$ or $a_{i+2}$, because it would conflict with $e^i$.
This contradicts Lemma  \ref{lem:in-out}.
\end{proof}

We remark that the properties of Lemmas \ref{lem:in-out} and \ref{lem:quad}
depend crucially on the fact that in $Q_2$ we included the edge $(c_1,c_2)$ between the
two centers rather than the edge $(a,b)$ between the two inner terminals.
It can be shown that with the edge $(a,b)$ instead, even after stellating
the faces an arbitrary number of times, and gluing back-to-back an arbitrary number
of copies of the resulting graph, yields a graph that does not 
have the property of Lemma \ref{lem:quad}:
the graph can be embedded between the two outer terminals
so that all edges to each outer terminal have the same color.

Attaching quads to the edges of a graph restricts the possible embeddings into 3 pages. 
The following lemma illustrates how $Q$ can be used.
The lemma will be used often in the sequel.

\begin{lemma}\label{lem:triangle}
Consider a graph $H$ formed by taking a triangle $(A,B,C)$ on the plane,
attaching (adding) a quad to each edge of the triangle, 
by identifying the outer terminals of the quad with the nodes of the edge.
There is no embedding of $H$ in three pages such that
all inner terminals of the quads are embedded in the arc $(B,C)$ that does not
contain $A$ and all edges from $A$ to the inner terminals have the same color.
\end{lemma}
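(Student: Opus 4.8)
The plan is to reduce to Lemma~\ref{lem:quad} applied to the quad glued on the edge $(B,C)$. Passing to the circle picture, I may assume after relabeling that the cyclic order is $A,B,\dots,C$ with every inner terminal of all three quads lying in the open arc $(B,C)$ avoiding $A$, and I call color~$1$ the common color of all edges from $A$ to inner terminals. The point is that the quad on $(B,C)$ together with the triangle edge $(B,C)$ is exactly a copy of $Q+12$ with outer terminals $B,C$, whose inner terminals all lie in the interval $(B,C)$. Hence it suffices to prove that every edge from $B$ or from $C$ to an inner terminal of this quad uses only colors~$2$ and~$3$: then the hypothesis of Lemma~\ref{lem:quad} holds with the two colors $\{2,3\}$, and that lemma yields the desired contradiction.

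The mechanism for excluding color~$1$ is a crossing argument against the monochromatic color-$1$ star at $A$. Since $A$ sits on the arc opposite to all inner terminals, for an inner terminal $z$ and any inner-terminal neighbor $x$ of $A$ lying strictly between $B$ and $z$, the chords $(A,x)$ and $(B,z)$ cross; as $(A,x)$ is color~$1$, the edge $(B,z)$ cannot be. Dually, an inner-terminal neighbor of $A$ strictly between $z$ and $C$ forbids color~$1$ on $(C,z)$. Because $A$ is adjacent, in color~$1$, to all $32$ inner terminals of the quads on $(A,B)$ and $(C,A)$, every inner terminal lying strictly between the leftmost and rightmost such neighbors $w,w'$ of $A$ receives only colors $2,3$ from both $B$ and $C$. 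So once all $16$ inner terminals of the quad on $(B,C)$ fall inside $(w,w')$, I take $(w,w')$ as the subinterval in Lemma~\ref{lem:quad} and I am done; cleanly, if the two globally extreme inner terminals $u,v$ are \emph{both} neighbors of $A$, then $(u,A,v)$ is a color-$1$ path enclosing every inner terminal, and the crossing argument applies to all $16$ terminals of the $(B,C)$-quad at once.

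The step I expect to be the main obstacle is the degenerate configuration in which a globally extreme inner terminal is itself an inner terminal of the quad on $(B,C)$: such a point has no neighbor of $A$ on its outer side, the chord from $B$ (or $C$) to it crosses nothing, and the crossing argument gives no information, so color~$1$ is a priori available there. This case cannot be dismissed by the two-color reduction verbatim, since Lemma~\ref{lem:quad} itself forces the $(B,C)$-quad to use all three colors on its terminal edges, so no contradiction is visible from that quad in isolation. To handle it I would exploit the remaining structure: the same crossing argument against the color-$1$ stars of \emph{all three} quads shows that every long chord — one spanning from the extreme region across the neighbors of $A$ — must avoid color~$1$, which confines the color-$1$ terminal edges of the $(B,C)$-quad to a short end-segment of the arc. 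Pushing this confinement far enough to recover the two-color hypothesis of Lemma~\ref{lem:quad} on a suitable sub-configuration, or else reproducing the nesting-of-color-$3$-edges contradiction from the proof of Lemma~\ref{lem:quad} (via Lemma~\ref{lem:in-out} on the surviving interior copies of $Q_2$) directly on the $(B,C)$-quad, is the delicate part, and is where I expect the real work to lie.
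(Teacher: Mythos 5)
Your first case (both globally extreme inner terminals $u,v$ are neighbours of $A$) is handled correctly and coincides with half of the paper's argument: the color-$1$ path $(u,A,v)$ forces every edge from $B$ or $C$ into the open interval $(u,v)$ to avoid color $1$, and Lemma~\ref{lem:quad} applied to the $(B,C)$ quad finishes. But the case you flag as the ``main obstacle'' --- a globally extreme inner terminal belonging to the $(B,C)$ quad --- is a genuine unresolved gap in your write-up: you offer only speculative directions (``pushing this confinement far enough,'' reproducing the nesting-of-color-$3$-edges argument), none of which is carried out, and the reason you get stuck is your insistence on extracting the contradiction from the $(B,C)$ quad in that configuration.

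The missing idea is to switch quads. Suppose the inner terminal $u$ closest to $B$ does not belong to the $(A,B)$ quad; then it belongs to the $(A,C)$ or the $(B,C)$ quad, so in either case $u$ is adjacent to $C$, and the chord $(u,C)$ carries some color, say $\alpha$. Every inner terminal of the $(A,B)$ quad lies strictly inside the interval $(u,C)$ (since $u$ is extreme and is not one of them), and every edge from $A$ or from $B$ into the open interval $(u,C)$ crosses the chord $(u,C)$ and hence avoids $\alpha$. Thus the $(A,B)$ quad satisfies the hypotheses of Lemma~\ref{lem:quad} with the two colors other than $\alpha$ --- a contradiction that does not even use the hypothesis on the color of $A$'s edges. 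Symmetrically, the terminal closest to $C$ must belong to the $(A,C)$ quad, or else that quad's edge to $B$ kills the $(A,C)$ quad... rather, the edge from that terminal to $B$ kills the $(A,C)$ quad in the same way. The only surviving configuration is exactly your clean case ($u$ in the $(A,B)$ quad, $v$ in the $(A,C)$ quad, both adjacent to $A$), so no ``delicate part'' remains. This is precisely the paper's two-case proof.
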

\begin{proof}
Suppose that there is such a 3-page embedding, consider its linearization
with $A$ lying outside the interval $(B,C)$.
Assume first that among all the inner terminals of the quads, the one embedded
closest to $B$ does not belong to the $AB$ quad, i.e., it belongs to
the $AC$ or the $BC$ quad. Let $u$ be this terminal and assume wlog that
the edge $(u,C)$ has color 1. All the inner terminals of the $AB$ quad 
are in the interval $(u,C)$; this interval can be reached from $A$ and $B$
only with colors 2 and 3. By Lemma \ref{lem:quad} this is impossible.

Therefore, the inner terminal $u$ closest to $B$ belongs to the $AB$ quad.
By a symmetric argument, the inner terminal $v$ closest to $C$ belongs to the $AC$ quad.
The edges $(A,u)$ and $(A,v)$ have the same color, say color 1.
Then all inner terminals of the $BC$ quad are in the interval $(u,v)$
and $B$ and $C$ can reach this interval only with colors 2 and 3.
This is impossible by Lemma \ref{lem:quad}.
\end{proof}

\section{The Graph}

Our `hard' graph $G$ is constructed as follows.
Take a long path $p=(x_1, x_2, \ldots, x_n)$, of $n$ nodes,
where $n$ is sufficiently large, say $n=1000$.
Take two other nodes 1, 2 and connect them to
all the nodes $x_i$ of the path, as in Fig. \ref{fig:graph}. 
This forms  2$(n-1)$ triangles, which we call the {\em big triangles}.
Subdivide each big triangle to three {\em small triangles}
by inserting a {\em center} node and connecting it to the 3 nodes of the triangle.
Inside each small triangle, attach a copy of the quad $Q$ to
each edge of the triangle, 
and add a central node connecting it to the
three nodes of the triangle and the innermost inner terminals of the
three quads. 
The construction is shown in Figure \ref{fig:graph}
(except for the quads attached to the edges, which we omitted
for clarity).
Note that all small triangles as well as all big triangles
satisfy the conditions of Lemma \ref{lem:triangle}.
Note also the copies of $Q_1$ with outer terminals 1,2 and inner terminals $x_i, x_{i+1}$.
We call the nodes 1, 2 the {\em terminals} of $G$,
we call the $n$ nodes $x_1, \ldots, x_n$ the
{\em vertical nodes} of $G$, and call the edges $(x_i,x_{i+1})$
the {\em vertical edges}.

\begin{figure}[h]
\centering
\vspace*{-1cm}
\includegraphics[scale=0.6]{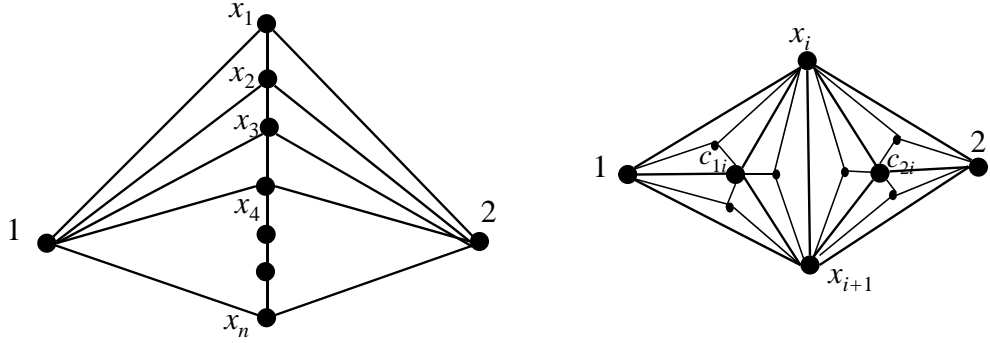}
\vspace*{-6.4cm}
\caption{The Graph $G$. (There are quads attached to the edges.)}
\vspace*{-0.3cm}
\label{fig:graph}
\end{figure}

We will show that $G$ cannot be embedded in three pages.
For this purpose, fix any 3-page embedding of $G$.
We will show that the embedding has to satisfy a sequence
of properties, and derive eventually a contradiction.

Given a 3-page circle embedding of $G$,
we designate one of the two arcs 
between the two terminals 1,2 as the {\em major} (1,2) arc,
and the other as the {\em minor} (1,2) arc as follows:
the major arc is an arc (1,2)
that contains at least half of the vertical nodes,
and the other (1,2) arc is the {\em minor} arc
(if both (1,2) arcs contain exactly half of the vertical nodes,
we arbitrarily designate one as the major and the other
as the minor arc).
Let $z_1$ be the node in the major arc that is closest to node 1 
and adjacent to 2,
and let $z_2$ be the node in the major arc that is closest to node 2
and adjacent to 1
 - see Fig. \ref{fig:crossing}.
We show first that there are not many vertical edges $(x_i,x_{i+1})$
with nodes on both (1,2) arcs.

\begin{figure}[h]
\centering
\vspace*{-0.8cm}
\includegraphics[scale=0.6]{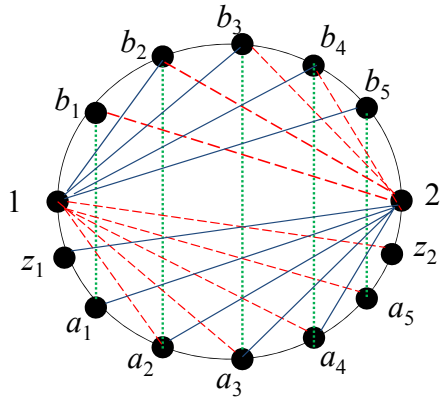}
\vspace*{-6cm}
\caption{Crossing vertical edges.}
\vspace*{-0.25cm}
\label{fig:crossing}
\end{figure}

\begin{lemma}\label{lem:crossing}
There are at most 4 vertical nodes $x_i$ in the arc $(z_1,z_2)$, whose
successor $x_{i+1}$ on the path $p$ is in the minor arc (1,2).
\end{lemma}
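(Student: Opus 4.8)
The plan is to first pin down the color of every crossing edge and then use the nesting together with the attached gadgets to bound their number.

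First I would fix the linear order obtained by cutting the circle at node $1$, so the nodes read $1, z_1, \ldots, z_2, 2$ followed by the minor arc; thus $1 < z_1 < z_2 < 2$ and every minor node lies to the right of $2$. The edges $(z_1,2)$ and $(1,z_2)$ exist (since $z_1$ is adjacent to $2$ and $z_2$ to $1$) and, because $1 < z_1 < z_2 < 2$, they conflict, so they receive two distinct colors; call them $\gamma_1$ for $(z_1,2)$ and $\gamma_2$ for $(1,z_2)$. Now let $(x_i,x_{i+1})$ be any crossing edge, with $x_i\in(z_1,z_2)$ and $x_{i+1}$ in the minor arc. Since $z_1<x_i<2<x_{i+1}$ it conflicts with $(z_1,2)$, and since $1<x_i<z_2<x_{i+1}$ it conflicts with $(1,z_2)$; hence it can be neither $\gamma_1$ nor $\gamma_2$ and must take the third color $\gamma_3$. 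So \emph{every} crossing edge has color $\gamma_3$, no two crossing edges conflict, and two chords from $(z_1,z_2)$ to the minor arc fail to conflict exactly when they are nested. Ordering the crossing edges $e_1\supset e_2\supset\cdots\supset e_k$ from outermost to innermost, their left endpoints $x_{i_1}<x_{i_2}<\cdots$ increase inside $(z_1,z_2)$ while their right endpoints decrease inside the minor arc.

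Next I would read off forced colors at a \emph{middle} crossing edge $e_r$ (one with both an outer and an inner crossing neighbor, i.e.\ $2\le r\le k-1$). The edge $(1,x_{i_r})$ conflicts with $(z_1,2)$ and with every outer $e_s$ ($s<r$, whose left endpoint lies in $(1,x_{i_r})$ and whose right endpoint does not), so it is neither $\gamma_1$ nor $\gamma_3$ and must be $\gamma_2$; symmetrically $(2,x_{i_r})$ conflicts with $(1,z_2)$ and with every inner crossing edge and must be $\gamma_1$. Thus at each middle endpoint $x_{i_r}$ the three edges to $1$, to $2$, and to $x_{i_r+1}$ realize all three colors $\gamma_2,\gamma_1,\gamma_3$. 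This three-colored neighborhood is the structural hook, and each such edge also carries the gadget on the vertical edge $(x_{i_r},x_{i_r+1})$: the copy of $Q_1$ with outer terminals $1,2$ and inner terminals $x_{i_r},x_{i_r+1}$, and the big-triangle centers $c_1^{(r)},c_2^{(r)}$ adjacent to $1,x_{i_r},x_{i_r+1}$ and to $2,x_{i_r},x_{i_r+1}$ respectively.

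To obtain the bound I would argue by contradiction from $k\ge 5$. Since the middle crossing edges are exactly $e_2,\ldots,e_{k-1}$, the assumption $k\ge 5$ guarantees a triple $e_{r-1},e_r,e_{r+1}$ of \emph{consecutive} middle edges; the goal is to show any such triple is impossible, which yields $k\le 4$. The crux — and the step I expect to be hardest — is to turn the nested $\gamma_3$ family plus the forced $\gamma_1/\gamma_2$ endpoints into an impossible gadget configuration. The difficulty is that the $Q_1$ copy on a crossing edge is \emph{split}: its inner terminals $x_{i_r},x_{i_r+1}$ lie in opposite $(1,2)$ arcs, so Lemmas~\ref{lem:inside} and \ref{lem:outside} do not apply verbatim and must be redone in this split setting. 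Using Proposition~\ref{closed}, I would track where the centers $c_1^{(r)},c_2^{(r)}$ can sit and how their edges may be colored: the fully three-colored neighborhood at $x_{i_r}$ confines them, while the $\gamma_3$ crossing edges of the outer and inner neighbors block the remaining escape routes, so that the $\gamma_3$ forced along the inner crossing edge (or the color of a neighboring center edge, or the two-coloring of the minor endpoints $(1,x_{i_r+1}),(2,x_{i_r+1})$) becomes overdetermined and produces two conflicting equally-colored edges. I expect the constant $4$ to arise precisely here: installing both monochromatic hypotheses needed to invoke the $Q_1$ (or, on a straddling big triangle, Lemma~\ref{lem:quad}/\ref{lem:triangle}) logic consumes one buffer crossing edge on each side of the middle edge in the triple, so only up to four crossing edges can avoid the contradiction. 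Carrying out this collision in full detail — rather than the qualitative outline above — is the main obstacle.
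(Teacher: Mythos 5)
Your first half is correct and coincides with the paper's: every crossing edge conflicts with both $(1,z_2)$ and $(2,z_1)$ and so takes the third color, the crossing edges are therefore nested, and at any ``middle'' crossing endpoint $x_{i_r}$ the edges to $1$ and to $2$ are forced to the colors of $(1,z_2)$ and $(2,z_1)$ respectively (the paper does exactly this with five nodes $a_1,\dots,a_5$ and successors $b_1,\dots,b_5$, forcing $(1,a_2),\dots,(1,a_5)$ and $(2,a_1),\dots,(2,a_4)$). Your accounting for why the constant is $4$ --- the middle edge needs two forced neighbors on each side --- is also the right intuition. But the proof stops exactly where the lemma's content begins: you state that turning this configuration into a contradiction is ``the main obstacle'' and offer only a speculative direction (redoing Lemmas \ref{lem:inside}/\ref{lem:outside} for a ``split'' copy of $Q_1$ whose inner terminals lie on opposite arcs). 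That is a genuine gap, not a routine verification, and the direction you gesture at is not the one that works: the paper never re-proves the $Q_1$ lemmas in a split setting.

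What the paper actually does with the triple of middle edges is the following. The forced colors give three-colored separations between consecutive middle endpoints (color-1 path $(a_2,1,a_3)$, color-2 path $(a_2,2,a_3)$, and the color-3 crossing edges $(a_2,b_2),(a_3,b_3)$), so by Proposition \ref{closed} a node in the arc $(a_2,a_3)$ can reach, besides $1,2$, only that arc and $[b_2,b_3]$; symmetric statements hold for $(a_3,a_4)$ and for the arcs on the minor side. Hence the entire interior of the big triangle $(1,a_3,b_3)$ lies in one ``strip,'' say $(a_2,a_3)\cup(b_2,b_3)$. Any edge from $a_3$ into $(b_2,b_3)$ and any edge from $b_3$ into $(a_2,a_3)$ must be color 3, and two such edges of the two kinds would conflict, so only one kind exists; consequently the inner terminals of the quads attached to two of the three edges of that triangle (either $(1,a_3),(a_3,b_3)$ or $(1,b_3),(a_3,b_3)$) all lie in a single arc, and whichever quad's outer terminals then reach that arc with only two colors violates Lemma \ref{lem:quad}. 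Your sketch contains neither the strip-confinement step nor the identification of which quad is killed, and without them the lemma is not proved.
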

\begin{proof}
By contradiction.
Suppose that there are 5 such vertical nodes $x_i$ in $(z_1,z_2)$
whose successor $x_{i+1}$ is in the minor arc (1,2).
Denote these 5 nodes as $a_1, \ldots , a_5$ in the order that they appear in $(z_1,z_2)$, and let their successors be $b_1, \ldots ,b_5$ respectively.
Assume wlog that edge $(1,z_2)$ has
color 1 and edge $(2,z_1)$ has color 2 
(they intersect so they must have different colors).
The 5 vertical edges $(a_1,b_1), \ldots, (a_5,b_5)$ intersect both edges $(1,z_2), (2,z_1)$, so they must all have color 3,
hence they do not intersect each other; see Fig. \ref{fig:crossing}.
The edges $(1,a_2), \ldots (1,a_5)$ must have color 1 because they intersect $(a_1,b_1)$ and $(2,z_1)$. Similarly the edges $(2,a_1), \ldots (2,a_4)$ must have color 2 because they intersect $(a_5,b_5)$ and $(1,z_2)$.
Note that a node inside the arc $(a_2 , a_3)$ can reach nodes other than nodes 1,2 only in the
same arc or the arc $[b_2,b_3]$,
 because of the color-1 path $(a_2,1,a_3)$, the color-2 path
 $(a_2,2,a_3)$ and the color-3 edges $(a_2,b_2), (a_3,b_3)$.
Similarly, a node inside the arc $(a_3,a_4)$ can reach nodes other than nodes 1,2 only in the
same arc or the arc $[b_3,b_4]$.
Node $a_3$ can only reach nodes in the arcs $[a_2,a_3,a_4]$ and 
$[b_2,b_3,b_4]$.

Similar observations hold for the
edges connecting nodes 1, 2 to the
vertical nodes $b_1, \ldots , b_5$ on the minor arc (1,2).
Edges $(1,b_5), (2,b_1)$,  must have color 1 or 2 because they intersect $(a_2,b_2)$.
If $(1,b_5)$ has color 1 then $(2,b_1)$ must have color 2,
hence all edges $(1,b_2), \ldots (1,b_5)$ must have color 1
and all the edges $(2,b_1), \ldots (2,b_4)$ must have color 2.
If $(1,b_5)$ has color 2 then $(2,b_1)$ must have color 1,
all edges $(1,b_2), \ldots (1,b_5)$ have color 2
and the edges $(2,b_1), \ldots (2,b_4)$ have color 1.
(Figure \ref{fig:crossing} depicts the latter case.)
In either case, note again that a node in the arc $(b_2,b_3)$
can reach nodes other than 1,2 only in the
same arc or the arc $[a_2,a_3]$.
Similarly, a node inside the arc $(b_3,b_4)$ can reach nodes other than 1, 2 only in the
same arc or the arc $[a_3,a_4]$.
Node $b_3$ can only reach nodes in the arcs $[b_2,b_3,b_4]$ and $[a_2,a_3,a_4]$.

Consider the big triangle $(1,a_3,b_3)$ of $G$.
From the observations in the previous two paragraphs it follows
that either all the internal nodes of this triangle are in the strip $(a_2,a_3) \cup (b_2,b_3)$ or they are all in the strip $(a_3,a_4) \cup (b_3,b_4)$. Assume without loss of generality that they are in the strip $(a_2,a_3) \cup (b_2,b_3)$ (the argument is the same in the other case).
Any edge from $a_3$ to the arc $(b_2,b_3)$ must be colored 3
and likewise any edge from $b_3$ to the arc $(a_2,a_3)$ must be colored 3, therefore there cannot exist both kinds of edges.
Hence, either all the inner terminals of the quads attached to
edges $(1,a_3), (a_3,b_3)$ are in arc $(a_2,a_3)$
or all inner terminals of the quads attached to the edges
$(1,b_3), (a_3,b_3)$ are in arc $(b_2,b_3)$.
Assume wlog that the former holds, i.e. all the inner terminals of the quads attached to
edges $(1,a_3), (a_3,b_3)$ are in arc $(a_2,a_3)$.

If the inner terminal closest to $a_3$ belongs to the $(1,a_3)$
quad, then the edge connecting it to 1 must be colored 1, hence 
all edges from the inner terminals of the $(a_3,b_3)$ quad to $a_3$
must be colored 2 or 3 and all edges from these inner terminals to
$b_3$ must be colored 3, contradicting Lemma \ref{lem:quad}.
On the other hand, if the inner terminal closest to $a_3$
belongs to the $(a_3,b_3)$ quad, then the edge to $b_3$ is colored 3,
hence 
all edges from the inner terminals of the $(1,a_3)$ quad to $a_3$
must be colored 1 or 2 and all edges from these inner terminals to
$1$ must be colored 1, contradicting again Lemma \ref{lem:quad}.
The lemma follows.
\end{proof}

\begin{comment}
Starting from the path $p=(x_1, \ldots, x_n)$ of vertical nodes,
remove first the nodes $z_1, z_2$ (if they are vertical nodes).
If there is a remaining vertical edge 
whose endpoints are on different (1,2) arcs in the embedding,
then remove it (and its nodes)
from the path, and repeat.
This greedy process will select a maximal set of disjoint vertical edges
that connect the two arcs (1,2) (thus, at most 4 edges)
and remove them (and their nodes) along with $z_1$ and $z_2$, hence it will 
break the original path into at most 7 subpaths.
Every remaining edge has both nodes on the same arc (1,2).
Therefore every remaining subpath has all its nodes on the same arc (1,2).

Restrict attention from now on to the longest such remaining subpath $p'$
and the corresponding subgraph of $G$.
Remove all the other vertical nodes and the nodes inside the corresponding
big triangles, and let $G'$ be the remaining graph.
To simplify notation, reindex the vertical nodes of the remaining path
as $x_1, \ldots, x_{n'}$,  
We have $n' \geq (n-10)/7 > 280$.

The assumed 3-page embedding of the original graph $G$
yields a 3-page embedding of the subgraph $G'$ in which all vertical
nodes $x_i$ are embedded in the same (1,2) arc.
\end{comment}

We view the 3-page embedding as an embedding on the line, starting with one 
of the terminals, followed by the major (1,2) arc (which is now an interval on the line), then the other terminal, followed by the minor arc (1,2).
We will focus on the interval $(1,2)$ corresponding to the major arc.
Let $z_1$ (resp. $z_2$) be again the node in the interval $(1,2)$
that is closest to node 1 (resp. 2)
and is adjacent to 2 (resp. 1).
Define the {\em stretch} between two points $u, v$, of the linear embedding, 
denoted $str(u,v)$, to
be the number of vertical nodes in the interval $(u,v)$.

\begin{lemma}\label{lem:stretch}
(1) If $u, v$ are two nodes in the interval $(1,2)$ connected by an edge 
then $str(u,v) \leq 15$.\\
(2) If $u, v$ are two nodes in the interval $(z_1,z_2)$ 
that have incident edges that exit the interval $[1,2]$
then $str(u,v) \leq 15$.
\end{lemma}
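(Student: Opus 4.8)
The plan is to prove both parts by contradiction, assuming $str(u,v)\ge 16$, and to funnel both into the same core situation: a big triangle whose three attached quads are trapped in the interval between two path-consecutive vertical nodes that are reached monochromatically from $1$ and from $2$, so that Lemma \ref{lem:triangle} (and through it Lemma \ref{lem:quad}) is violated. The whole argument hinges on first forcing the ``fan'' edges from $1$ and $2$ to the vertical nodes of $(u,v)$ to use only two of the three colors.

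First I would establish that two-color property. In part (1), let the edge $(u,v)$ have color $3$, say. Since $1$ and $2$ lie outside $[u,v]$ while every vertical node $x_j\in(u,v)$ lies inside, each edge $(1,x_j)$ and each edge $(2,x_j)$ crosses $(u,v)$ and therefore cannot use color $3$. In part (2), an edge incident to $u$ (resp. $v$) that exits $[1,2]$ crosses both $(2,z_1)$ and $(1,z_2)$; as those two edges cross each other they carry the two distinct colors $1,2$, so every such exiting edge is forced to color $3$. The color-$3$ exiting edge at $u$ then crosses every $(1,x_j)$ with $x_j\in(u,v)$, and the one at $v$ crosses every $(2,x_j)$, so again no fan edge can use color $3$. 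In both parts the conclusion is identical: all edges from $1$ and $2$ to the vertical nodes of $(u,v)$ use only colors $1,2$.

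Next I would pin the coloring and localize a good pair. Among these fan edges the only conflicting pairs are $(2,x_i)$ with $(1,x_j)$ for $x_i<x_j$, so such pairs differ in color; inspecting the leftmost and rightmost vertical nodes of $(u,v)$ forces all but these two extremes to be reached from $1$ by one fixed color (say $1$) and from $2$ by the other (say $2$). Writing $x_L,x_R$ for the extreme vertical nodes, the monochromatic paths $(x_L,1,x_R)$ and $(x_L,2,x_R)$ make $(x_L,x_R)$ closed except through $1,2$ by Proposition \ref{closed}, so in fact \emph{every} vertical node strictly inside $(x_L,x_R)$ is automatically reached from $1$ by color $1$ and from $2$ by color $2$. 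By Proposition \ref{closed}(3), each maximal subpath of $p$ left after deleting $x_L,x_R$ lies entirely inside or entirely outside $(x_L,x_R)$; deleting two nodes yields at most three subpaths, and with $str(u,v)-2$ vertical nodes strictly inside, one inside-subpath must contain two of them. This gives a path-consecutive pair $x_i,x_{i+1}$ of such ``middle'' nodes inside $(x_L,x_R)\subseteq(u,v)$, with $(1,x_i),(1,x_{i+1})$ colored $1$ and $(2,x_i),(2,x_{i+1})$ colored $2$ (the comfortable threshold $15$ leaves room for the $O(1)$ boundary nodes removed along the way).

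Finally I would invoke the triangle machinery on the big triangle $(1,x_i,x_{i+1})$, whose three attached quads make it an instance of Lemma \ref{lem:triangle} with apex $A=1$: the two edges from $1$ already share color $1$. The monochromatic paths $(x_i,1,x_{i+1})$ and $(x_i,2,x_{i+1})$ keep $(x_i,x_{i+1})$ closed except through $1,2$, so each quad attached to an edge of this triangle, being connected and attached only to triangle nodes, lies as a whole either inside or outside $(x_i,x_{i+1})$; placing its inner terminals on the arc not containing $1$ is exactly the hypothesis Lemma \ref{lem:triangle} needs, after which the lemma delivers the contradiction (equivalently one argues as in Lemma \ref{lem:crossing}: whichever attached quad owns the inner terminal closest to $x_i$ fixes a color on its $x_i$-edge, leaving the other quad reachable from its two outer terminals by only two colors, contradicting Lemma \ref{lem:quad}). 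I expect the confinement of these inner terminals to the correct side to be the main obstacle, since the triangle's internal nodes are adjacent to $1$ and $2$ and could in principle escape $(x_i,x_{i+1})$ through them; ruling this out should use the surrounding color-$3$ edge — the edge $(u,v)$ in part (1) and the exiting edges in part (2) — to block the escape, exactly as the strip-confinement step does in the proof of Lemma \ref{lem:crossing}.
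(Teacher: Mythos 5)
Your first step (forcing all fan edges from $1$ and $2$ to the vertical nodes of $(u,v)$ onto two colors, in both cases, via the color-3 edge $(u,v)$ or the two color-3 exiting edges) is exactly the paper's opening move, and your pinning of the colors on all but the extreme vertical nodes is also correct. But the rest of the plan has two genuine gaps. First, your localization step invokes Proposition \ref{closed}(3) to split the path $p$ into subpaths lying entirely inside or outside $(x_L,x_R)$, but that proposition requires monochromatic paths of \emph{all three} colors between $x_L$ and $x_R$; you only have two (through $1$ and through $2$), and a color-3 vertical edge can perfectly well exit $(x_L,x_R)$ without touching $(u,v)$ or the exiting edges (e.g.\ into the minor arc). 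The paper instead has to invoke Lemma \ref{lem:crossing} to discard the at most $4$ vertical nodes whose successors sit in the minor arc, and then prove a separate, nontrivial claim (Claim \ref{cl:next1}) that no other vertical node is embedded between $x_i$ and $x_{i+1}$ --- a fact your final step silently needs and which itself consumes Lemmas \ref{lem:quad} and \ref{lem:triangle}.

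The second and more serious gap is the confinement step you yourself flag as ``the main obstacle'': it is not merely hard, it is false. Under exactly the hypotheses you have arranged ($(1,x_i),(1,x_{i+1})$ color 1, $(2,x_i),(2,x_{i+1})$ color 2, edge $(x_i,x_{i+1})$ present), Lemma \ref{lem:inside} applied to the $Q_1$ copy with inner terminals $x_i,x_{i+1}$ \emph{guarantees} that at least one of the two small-triangle centers lies \emph{outside} $(x_i,x_{i+1})$; the legal escape destinations are the neighboring intervals $(x_{i-1},x_i)$ and $(x_{i+1},x_{i+2})$, which lie inside $(u,v)$, so the surrounding color-3 edge $(u,v)$ (or the exiting edges in part (2)) does nothing to block them --- a color-3 edge from the escaped center back to $x_i$ or $x_{i+1}$ stays nested inside $(u,v)$ and conflicts with nothing. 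Consequently you can never put the big triangle $(1,x_i,x_{i+1})$ into the hypotheses of Lemma \ref{lem:triangle} for a single pair. The paper's actual proof accepts the escape, uses Claim \ref{cl:next2} to pin the escaped center into $(x_{i+1},x_{i+2})$ (forcing $z=x_{i+2}$), and then shows that the resulting color-3 edge traps \emph{both} centers of the next pair $x_{i+1},x_{i+2}$ inside $(x_{i+1},x_{i+2})$, contradicting Lemma \ref{lem:inside} there. So the contradiction lives one interval over from where you are looking for it, and comes from Lemma \ref{lem:inside} rather than Lemma \ref{lem:triangle}; without this propagation-to-the-neighbor idea the argument does not close.
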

\begin{proof}
The proof is essentially the same for both parts.
Suppose that $u,v$ are two nodes as in part (1) or (2) of the lemma
such that $str(u,v) > 15$. We will derive a contradiction.
If there is an edge $(u,v)$, say of color 3,  then all edges from the terminals 1, 2
to all the nodes in the interval $(u,v)$ must use the other two
colors 1, 2.
On the other hand, if $u , v$ are in the interval $(z_1,z_2)$ and there are two edges incident to them that exit the
interval $[1,2]$, then both edges intersect the edges
$(1,z_2)$ and $(2,z_1)$, which themselves intersect each other.
Therefore, the two exiting edges incident to $u$ and $v$ must have the same color,
say color 3, and thus again all edges from the terminals 1, 2
to all the nodes in the interval $(u,v)$ must use the other two
colors 1, 2.

Let $y_1, y'_1$ (respectively, $y_2, y'_2$)
be the vertical nodes in the interval $(u,v)$ 
that are closest to $1$ (resp.  to $2$).
So, if we assume wlog that $u$ is closer to 1 (than $v$ is)
and $v$ is closer to 2,
then the order of these nodes is
$1, u, y_1, y'_1, y'_2, y_2, v, 2$ (or the reverse).
The other vertical nodes in the interval $(u,v)$ lie
between $y'_1$ and $y'_2$.
Assume without loss of generality that $(1,y_2)$ has color 1 and
$(2,y_1)$ has color 2. Then all edges from 1 to the vertical nodes
in the interval $(u,v)$, except possibly $y_1$ have color 1,
and  all edges from 2 to the vertical nodes in $(u,v)$, except possibly $y_2$, have color 2.

Let $x_i$ be any vertical node in the interval $(u,v)$ such that
$x_i,\notin \{ y_1, y'_1, y_2, y'_2, x_1, x_{n}\}$, 
$x_{i+1} \notin \{ y_1, y'_1, y_2, y'_2, x_{n}\}$ and $x_{i+1}$
is in the interval $(1,2)$ (i.e., $x_{i+1}$ is not in the minor arc (1,2)). 
By Lemma \ref{lem:crossing} there are
at most 4 vertical nodes $x_i$ in the interval $(z_1,z_2)$ whose successor
$x_{i+1}$ is in the minor arc (1,2). 
Since there are at least 16 vertical nodes in $(u,v)$,
we can choose $x_i$ to be a vertical node in $(u,v)$ that satisfies the
above conditions.

\begin{claim}\label{cl:next1}
Node $x_{i+1}$ is in $(u,v)$ and there is no other vertical node
embedded between $x_i$ and $x_{i+1}$.
\end{claim}
\begin{proof}
Suppose that $x_{i+1}$ is not in $(u,v)$.
Then the edge $(x_i,x_{i+1})$ must have color 3 because it conflicts with the color-1
path 
$(y'_1,1,y'_2)$ and the color-2 path $(y'_1,2,y'_2)$. 
In case (1) of the statement of the lemma, where there is an edge $(u,v)$ of color 3,
we get a contradiction. In case (2) where there are color-3 edges incident to $u$
and $v$ that exit the interval $(1,2)$, there is again a contradiction because
$x_{i+1}$ lies in the interval $(1,2)$,
and therefore $(x_i,x_{i+1})$ intersects one of the exiting edges 
incident to $u, v$.
In either case we conclude that $x_{i+1}$ is in $(u,v)$.

\begin{figure}[h]
\centering
\vspace*{-1.2cm}
\includegraphics[scale=0.6]{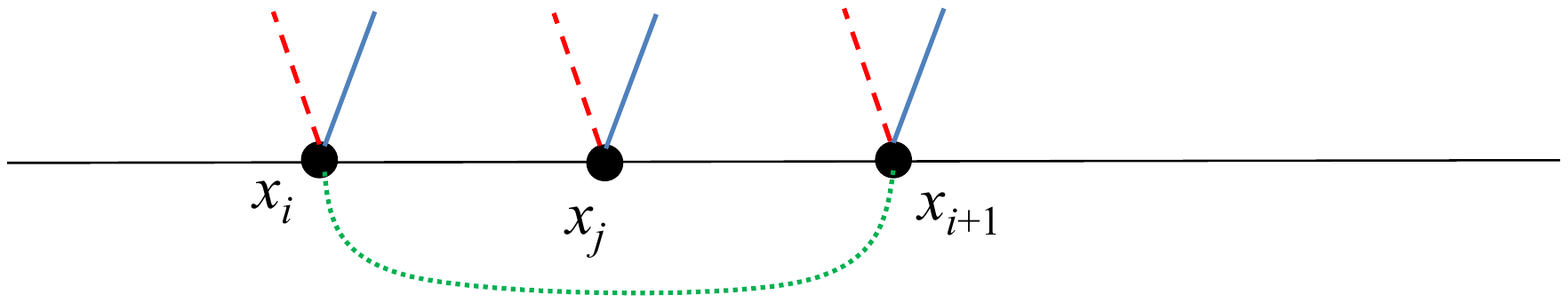}
\vspace*{-8.7cm}
\caption{}
\vspace*{-0.3cm}
\label{fig:next1}
\end{figure}

Since neither $x_i$ nor $x_{i+1}$ is $y_1$ or $y_2$,
the edges $(1,x_i), (1,x_{i+1})$ are colored 1 and the
edges $(2,x_i), (2,x_{i+1})$ are colored 2.
Suppose that there is another vertical node $x_j$,
in the interval between $x_i$ and $x_{i+1}$ (see Fig. \ref{fig:next1}).
Then the edge $(1,x_j)$ has color 1, and the edge $(2,x_j)$ has color 2.
Therefore, the edge $(x_i,x_{i+1})$ must have color 3.
No edge can exit the interval $(x_i,x_{i+1})$, other than edges to 1, 2,
because of the color-1 path $(x_i, 1, x_{i+1})$, the color-2 path
$(x_i, 2, x_{i+1})$ and the color-3 edge $(x_i,x_{i+1})$.
Since $x_j$ is in this interval, then all vertical nodes of the path $p$
between $x_j$ and $x_i$ or $x_{i+1}$ must be in this interval.
That is, if $j<i$, then $x_{j+1}, \ldots, x_{i-1}$ must be in the interval
$(x_i,x_{i+1})$; 
if $j>i+1$, then $x_{i+2}, \ldots, x_j$ are in the interval $(x_i,x_{i+1})$.

Suppose without loss of generality that $j<i$, hence $x_{i-1}$ is in
$(x_i,x_{i+1})$. 
Consider the triangles $(1,x_{i-1},x_{i})$, $(2,x_{i-1},x_{i})$.
Their interior nodes must all be in the interval $(x_i,x_{i+1})$
because $x_{i-1}$ is in this interval
(refer to Fig. \ref{fig:next1} with $x_{i-1}$ in
place of $x_j$).
If all the inner terminals of the quad attached to the edge $(1,x_i)$
are in the interval $(x_{i-1}, x_{i+1})$, then all edges connecting them
to $1$ must have color 1 and all edges to $x_i$ must have color 3,
contradicting Lemma \ref{lem:quad}.
Similarly, if all the inner terminals of the quad attached to the edge $(2,x_i)$
are in the interval $(x_{i-1}, x_{i+1})$, then all edges connecting them
to $2$ must have color 2 and all edges to $x_i$ must have color 3,
contradicting again Lemma \ref{lem:quad}.
It follows that at least one inner terminal from each of the two
quads attached to edges $(1,x_i), (2,x_i)$ must be in
the interval $(x_i, x_{i-1})$; the edges connecting these
inner terminals to 1 and 2 respectively are colored 1, 2.
Hence edge $(x_i, x_{i-1})$ is colored 3.
Because of the color-1 and -2 paths $(x_i,1,x_{i-1})$ and
$(x_i,2,x_{i-1})$, no edge to a node other than 1, 2, can exit 
the interval $(x_i, x_{i-1})$, hence all the interior of the
triangle $(1,x_{i-1},x_{i})$ (as well as $(2,x_{i-1},x_{i})$) must lie
in this interval. All the edges from node 1 to the
inner terminals must use color 1, so this contradicts Lemma \ref{lem:triangle}. 
\end{proof}

%Suppose now that $x_i, x_{i+1} \notin \{ y_1, y'_1, y_2, y'_2, x_1, x_{n'}\}$.
%Note that since there are more than 12 vertical nodes in $(u,v)$
%we can choose $x_i$ to be a vertical node in $(u,v)$ that is not in
%the set $\{ y_1, y'_1, y_2, y'_2, x_1, x_{n'}\}$ or among
%their predecessors on the path $p'$.
%By Claim \ref{cl:next1}, $x_{i+1}$ is also in $(u,v)$.
Assume without loss of generality that $x_{i+1}$ is embedded right of $x_i$.
Let $y$ be the vertical node left of $x_i$ and $z$ the vertical node
right of $x_{i+1}$.
Since $x_i, x_{i+1}$ are in $(u,v)$ and are not
among $\{ y_1, y'_1, y_2, y'_2\}$, the nodes $y, z$ exist,
and the edges from 
$y, x_i, x_{i+1}, z$ to 1 are colored 1 and the edges to 2 are colored 2.

\begin{claim}\label{cl:next2}
If a node adjacent to both $x_i, x_{i+1}$ (other than 1, 2) is outside the
interval  $(x_i, x_{i+1})$ then it must be either in the interval
$(y,x_i)$ or in the interval $(x_{i+1},z)$.
In the former case $y=x_{i-1}$ and in the latter case $z=x_{i+2}$.
\end{claim}
\begin{proof}
Let $w$ be a node adjacent to both $x_i, x_{i+1}$ (other than 1, 2) 
that is outside the interval  $(x_i, x_{i+1})$.
Note that $w \neq z$, since $w$ is adjacent to both $x_i, x_{i+1}$,
hence it is not a vertical node, and $z$ is a vertical node.
Assume without loss of generality that $w$ is to the right of the interval  $(x_i, x_{i+1})$.
(The argument is symmetric if $w$ is to the left of the interval.)
If $w$ is right of $z$, then both edges $(z,x_i), (z,x_{i+1})$ have color 3
(because they intersect the color-1 and -2 paths $(y,1,z), (y,2,z)$),
thus there is a color-3 path between $x_i$ and $x_{i+1}$.
There are also color-1 and -2 paths $(x_i,1,x_{i+1})$, $(x_i,2,x_{i+1})$,
hence no edge can exit the interval $(x_i, x_{i+1})$ to a node other than 1, 2, $w$,
and any other node adjacent to both $x_i, x_{i+1}$ must be inside $(x_i, x_{i+1})$.
This implies in particular that the center of either the triangle
$(1,x_i,x_{i+1})$ or the triangle  $(2,x_i,x_{i+1})$ is in the
interval $(x_i, x_{i+1})$, and hence all the nodes of the triangle
are in this interval.
This contradicts Lemma \ref{lem:triangle} because the interval
is reachable from node 1 only with color 1 (and from node 2 only with color 2).

We conclude that $w$ is in the interval $(x_{i+1},z)$ - see Fig. \ref{fig:next2}.
The edge $(x_i,w)$ has color 3 because it intersects $(1,x_{i+1}), (2,x_{i+1})$.
Node $x_{i+1}$ cannot reach any node other than 1, 2 left of $x_i$ nor right of $z$ because of the color-1 and -2 paths $(x_i,1,z), (x_i,2,z)$ and the color-3 edge
 $(x_i,w)$. 
Since $x_{i+1} \neq x_n$ from our choice of $x_i$,
node $x_{i+1}$ has a successor $x_{i+2}$ on the path $p$,
and $x_{i+2}$ must lie in the interval $(x_i,z)$.
By Claim \ref{cl:next1}, $x_{i+2}$ cannot be in the interval  $(x_i,x_{i+1})$,
hence it must be $z$.
\end{proof}

\begin{figure}[h]
\centering
\vspace*{-1.2cm}
\includegraphics[scale=0.6]{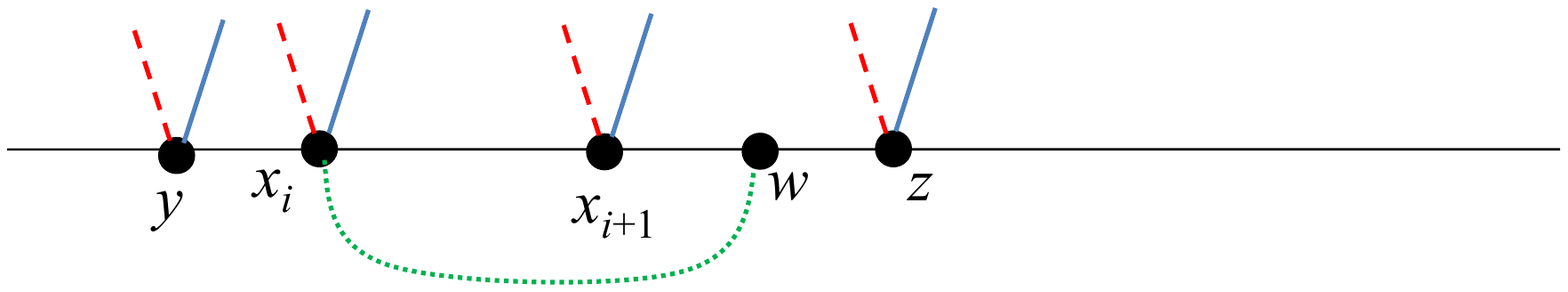}
\vspace*{-9.2cm}
\caption{}
\vspace*{-0.3cm}
\label{fig:next2}
\end{figure}

We can finish now the proof of the lemma.
Applying Lemma \ref{lem:inside} to the copy of $Q_1$ with outer terminals
$1, 2$ and inner terminals $x_i, x_{i+1}$,
we deduce that at least one of the centers
of the triangles $(1,x_i,x_{i+1})$, $(2,x_i,x_{i+1})$ must be outside
the interval $(x_i, x_{i+1})$.
Let $w$ be this center. By Claim \ref{cl:next2}, $w$
is either in $(y,x_i)$ or in $(x_{i+1},z)$.
Assume wlog that $w$ is in in $(x_{i+1},z)$
- see Fig. 11.
Then $z=x_{i+2}$ by Claim \ref{cl:next2}.
The edge $(x_i,w)$ has color 3 (because it intersects $(1,x_{i+1}), (2,x_{i+1})$),
 hence $x_{i+1}$ cannot reach outside the interval $(x_i,x_{i+2})$
and $x_{i+2}$ cannot reach inside $(x_i,x_{i+1})$.
 Therefore, all nodes (other than 1, 2) adjacent to both $x_{i+1},x_{i+2}$,
 e.g. the centers of the triangles $(1,x_{i+1},x_{i+2})$ and $(2,x_{i+1},x_{i+2})$
 must be inside the interval $(x_{i+1},x_{i+2})$.
 This contradicts Lemma \ref{lem:inside} for the copy of $Q_1$ with outer terminals
$1, 2$ and inner terminals $x_{i+1}, x_{i+2}$.
\end{proof}

We will concentrate on a region of the interval between the terminals
that has the properties indicated in the following lemma.

\begin{lemma}\label{lem:prime}
There is a subinterval $I$ of the interval $(1,2)$ such that
(1) no edge from a node in $I$ exits the interval $(1,2)$,
(2) all edges from one terminal to $I$ use only one color, and all
edges from the other terminal use the other two colors, and
(3) $I$ contains at least 240 vertical nodes.
\end{lemma}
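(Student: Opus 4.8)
The goal is to locate a subinterval $I$ of the major $(1,2)$ arc that behaves like the "clean" region needed to apply the earlier machinery: no edges escape from $I$ to the outside, the two terminals reach $I$ using a two-color split (one color for terminal~1, the other color for terminal~2, with the third color forbidden on terminal-to-$I$ edges), and $I$ still contains many vertical nodes. The plan is to build $I$ by first finding a long run of consecutive vertical nodes that all stay inside the major arc and whose terminal-edges are color-uniform, and then trimming this run so that the no-exit condition (1) holds at the two ends.

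First I would invoke Lemma~\ref{lem:crossing}: among the vertical nodes in $(z_1,z_2)$, at most $4$ have their path-successor in the minor arc. Since there are roughly $n/2$ vertical nodes in the major arc (by the definition of major), discarding these at most $4$ "crossing" nodes (and the endpoints $z_1,z_2$) breaks the remaining vertical nodes into a bounded number of maximal runs, each consisting of consecutive path-nodes all embedded in the major arc with no crossing edge between them. One of these runs must be long — on the order of $n/10$, hence comfortably more than $240$ even with later losses. Within this run I would fix the two-color split exactly as in the proof of Lemma~\ref{lem:quad}: name the extreme vertical nodes of the run nearest to $1$ and to $2$, color the critical edges $(1,\cdot)$ and $(2,\cdot)$ with colors $1$ and $2$, and conclude that all edges from terminal~1 to the interior nodes of the run use color~1 and all edges from terminal~2 use color~2, so the third color is never used on terminal-to-interior edges. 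This gives conditions (2) and (3) for the interval spanned by the interior of the run; the main content still missing is condition (1), the no-exit property.

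The hard part is establishing (1): a priori a node inside the run could have an edge reaching a node outside the $(1,2)$ interval (i.e. into the minor arc). Here is where Lemma~\ref{lem:stretch} does the heavy lifting. Any edge leaving the $(1,2)$ interval must cross both $(1,z_2)$ and $(2,z_1)$ and hence be forced to color~3; part~(2) of Lemma~\ref{lem:stretch} then says that the two endpoints-on-the-major-side of two such exiting edges can have stretch at most~$15$. So within my long run the set of nodes that reach outside $(1,2)$ is confined to a window of at most $15$ vertical nodes' worth of stretch. I would therefore slide past any such window: since the run is far longer than $15$, I can cut out a sub-run $I$ that avoids every node with an exiting edge, simply by taking the portion of the run lying strictly beyond the (bounded-stretch) cluster of exit-nodes. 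Formally, I would let the exit-nodes (if any) occupy an interval of stretch $\le 15$, and take $I$ to be the remaining stretch of the run on one side of it — the longer side — which still contains a large number of vertical nodes because the run was long and only a $15$-stretch window was removed. By construction no node of $I$ exits $(1,2)$, giving condition~(1), while (2) and (3) are inherited from the run.

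**Bookkeeping and the count.** The only quantitative obstacle is chasing the constants so that $240$ survives all the subtractions: losing $2$ endpoints $z_1,z_2$, at most $4$ crossing nodes, splitting into a bounded number of runs, and finally excising a stretch-$15$ exit-window. With $n=1000$ this is comfortable — the dominant loss is the splitting into runs, and even a crude bound (one run of size at least, say, $(n-\text{const})/(\text{number of runs})$) leaves several hundred vertical nodes, well above $240$. I expect the genuinely delicate step to be verifying the no-exit trimming rigorously: one must argue that \emph{all} nodes of $I$ (not merely the vertical ones) respect condition~(1). This follows because any node strictly inside the sub-run is enclosed by the color-$1$/color-$2$ paths through the two terminals together with the vertical edges of the run, so by Proposition~\ref{closed} its entire connected component — and in particular any neighbor it reaches — stays inside $I$; an edge exiting $(1,2)$ would contradict this confinement once the exit-window has been removed.
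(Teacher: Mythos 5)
Your plan gets condition (1) right---confining the exit edges to a stretch-$15$ window via Lemma~\ref{lem:stretch}(2) and cutting around it is exactly what the paper does---but it goes wrong on condition (2), which is the real content of the lemma, and on the count.

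For condition (2) you claim that on your long run \emph{both} terminals become monochromatic (``all edges from terminal~1 \dots use color~1 and all edges from terminal~2 use color~2, so the third color is never used''), ``exactly as in the proof of Lemma~\ref{lem:quad}.'' But the argument in Lemma~\ref{lem:quad} starts from the \emph{hypothesis} that the terminal edges use only two colors in total; here no such hypothesis is available, and that is precisely what must be established. What crossing $(2,z_1)$ and $(1,z_2)$ gives you for free is only that edges from $1$ into $(z_1,z_2)$ avoid color~$2$ and edges from $2$ avoid color~$1$; color~$3$ remains available to both terminals, and the color-$3$ terminal edges are \emph{not} confined to a bounded stretch window, so they cannot be trimmed away the way the exit edges can. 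The paper's resolution is asymmetric: letting $g_2$ be the rightmost node of $(z_1,z_2)$ with a color-$3$ edge to $1$ and $g_1$ the leftmost with a color-$3$ edge to $2$, non-crossing of color-$3$ edges forces $g_2$ to lie left of $g_1$; hence to the right of $g_2$ terminal $1$ is monochromatic while to the left of $g_1$ terminal $2$ is, and the exit window $[f_1,f_2]$ sits inside $[g_2,g_1]$. Splitting there yields two candidate intervals, on each of which \emph{one} terminal uses one color and the other uses two---which is exactly how the lemma is worded, and is all that the later lemmas need. Your stronger two-monochromatic-terminals claim is unprovable by your argument, and your proof never identifies the points $g_1,g_2$ that make the correct, weaker statement go through.

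The quantitative step also fails. Decomposing the $\ge n/2=500$ major-arc vertical nodes by deleting $z_1,z_2$ and up to four crossing nodes yields up to seven runs, so the longest run is only guaranteed about $70$ vertical nodes---not ``several hundred''---which is below the required $240$ (and below what Lemma~\ref{lem:fin1} later needs). The run decomposition is in any case unnecessary: the lemma only asks that $I$ \emph{contain} $240$ vertical nodes, not that they be consecutive on $p$, and the paper simply counts all vertical nodes lost to $z_1,z_2$ and to $[f_1,f_2]$ (at most $20$), leaving $480$ split between the two candidate intervals.
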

\begin{proof}
Let again $z_1$ (resp. $z_2$) be the node in the interval $(1,2)$ that
is closest to 1 (resp. 2) and is adjacent to 2 (resp. 1).
Assume wlog that the edge $(1,z_2)$ 
has color 1 and the edge $(2,z_1)$ has color 2.
All edges from 1 to nodes in the interval $(z_1,2)$
have color 1 or 3,
and similarly all edges from 2 to nodes in $(1,z_2)$
have color 2 or 3. 
Let $g_2$ be the node in the interval $(z_1,z_2)$ closest to 2 that has a color-3 edge to 1
if there is such a node (see Fig. \ref{fig:prime}); otherwise let $g_2=z_1$.
Similarly, let $g_1$ be the node in the interval $(z_1,z_2)$ closest to 1 
that has a color-3 edge to 2, if there is such a node, otherwise let $g_1=z_2$.
From the definitions, all edges from $1$ to the (open) interval $(g_2,2)$ have color 1,
and all edges from 2 to the interval $(1,g_1)$ have color 2.
Clearly, $g_2$ must be left of (or equal to) $g_1$ because two color-3 edges cannot intersect.
If there is an edge incident to a node $u $ in the interval $(z_1,z_2)$ that
exits the interval $(1,2)$, it must have color 3 and thus $u$ must be between $g_2$ and $g_1$.
If there are such exiting edges,
then let $f_1$ be the leftmost node in $(z_1,z_2)$ that has such an edge and
$f_2$ be the rightmost such node. 
By Lemma \ref{lem:stretch}, $str(f_1,f_2) \leq 15$.
If there are no such edges that connect a node in $(z_1,z_2)$ to a node outside $(1,2)$,
then let $f_1=g_1$, $f_2=g_2$.

\begin{figure}[h]
\centering
\vspace*{-1.2cm}
\includegraphics[scale=0.6]{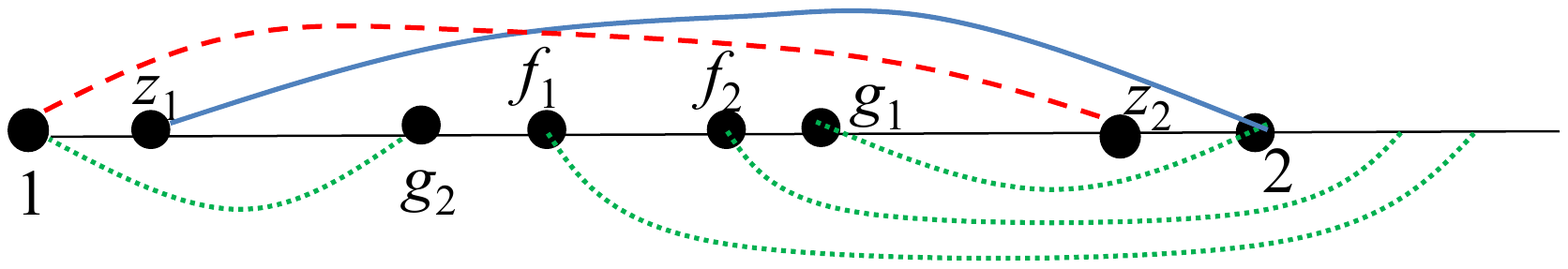}
\vspace*{-8.8cm}
\caption{}
\vspace*{-0.3cm}
\label{fig:prime}
\end{figure}

The intervals $(z_1,f_1)$, $(f_2,z_2)$ have the following properties:\\
(1) No edge from a node in these intervals exits $(1,2)$.\\
(2) All edges from node 1 to $(f_2,z_2)$ have color 1, and all edges from node 2
to $(z_1,f_1)$ have color 2.\\
(3) The total number of vertical nodes in the two intervals is at least $\frac{n}{2}-20 = 480$.

The interval among  $(z_1,f_1)$, $(f_2,z_2)$ that has the maximum number of
vertical nodes satisfies the conditions of the lemma.
\end{proof}

We call the subinterval $I$ of Lemma \ref{lem:prime} the {\em prime region}.
Assume wlog for the remainder that all edges from the prime region $I$ 
to terminal 1 have color 1,
and edges to terminal 2 have color 2 or 3; there are no edges from $I$ that exit the interval $(1,2)$.

By Lemma \ref{lem:stretch}, if a node in the prime region $I$
has stretch distance more than 15 from the endpoints of $I$,
then all its adjacent nodes (except 1,2) are also in the prime region.
Let $x_i, i \neq n$ be any vertical node in the prime region
that has stretch distance more than 50 from its endpoints.
Then all the nodes that are within distance 3 from $x_i$ in the
graph $G \setminus \{1,2\}$ are also in the prime region.
This implies in particular that
$x_{i+1}$ is also in the prime region, and so are
all nodes adjacent to $x_i$ or $x_{i+1}$ in $G$ (except 1,2)
and their adjacent nodes.
The edges from 1 to $x_i$, $x_{i+1}$ are both colored 1.
The edges from 2 to $x_i$, $x_{i+1}$ are colored 2 or 3;
we show next that they have the same color.

\begin{lemma}\label{lem:same}
Let $x_i, i \neq n$ be any vertical node in the prime region
that has stretch distance more than 50 from its endpoints.
The edges $(2,x_i), (2,x_{i+1})$ have the same color.
\end{lemma}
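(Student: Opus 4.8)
I plan to argue by contradiction: suppose the edges $(2,x_i)$ and $(2,x_{i+1})$ have different colors. Since both are colored $2$ or $3$, one of them is colored $2$ and the other colored $3$. Because the whole neighborhood of $x_i$ and $x_{i+1}$ (within graph-distance $3$ in $G\setminus\{1,2\}$) lies in the prime region, I may reason entirely inside $(1,2)$ without worrying about edges exiting this interval. The edges $(1,x_i),(1,x_{i+1})$ are both colored $1$. The key structural fact I want to exploit is the copy of $Q_1$ with outer terminals $1,2$ and inner terminals $x_i,x_{i+1}$, together with the stellation that attaches centers and quads to the two big triangles $(1,x_i,x_{i+1})$ and $(2,x_i,x_{i+1})$.

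\textbf{Locating the vertical neighbors and the centers.}
The first step is to pin down the relative positions of $x_i,x_{i+1}$ and their path-neighbors, using the machinery already developed in Lemma~\ref{lem:stretch} (Claims~\ref{cl:next1} and \ref{cl:next2}). From those claims I know that no other vertical node sits strictly between $x_i$ and $x_{i+1}$, and that any node adjacent to both $x_i,x_{i+1}$ lying outside the interval $(x_i,x_{i+1})$ must be one of the immediate path-neighbors' intervals, forcing $z=x_{i+2}$ (or $y=x_{i-1}$) in that situation. Next I want to understand where the two triangle centers of $(1,x_i,x_{i+1})$ and $(2,x_i,x_{i+1})$ go. Since the edge $(2,x_i)$ and $(2,x_{i+1})$ now carry different colors (one $2$, one $3$), the color-$2$/color-$3$ path structure between $x_i$ and $x_{i+1}$ is broken on the ``$2$ side'': we no longer automatically have a two-colored path $(x_i,2,x_{i+1})$. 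This is precisely the asymmetry I must convert into a contradiction.

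\textbf{Deriving the contradiction.}
The plan is to show that the differing colors on $(2,x_i),(2,x_{i+1})$ force one of the triangles $(1,x_i,x_{i+1})$, $(2,x_i,x_{i+1})$ into a configuration where all of its interior (its center, and then, via reachability arguments, all the inner terminals of its three attached quads) must be squeezed into an interval reachable from the relevant outer node using a single color, contradicting Lemma~\ref{lem:triangle}. Concretely, I expect to use the color-$1$ paths $(x_i,1,x_{i+1})$ on one side and the single color-$3$ (or color-$2$) edge to $2$ on the other to confine a center inside $(x_i,x_{i+1})$; once a center is inside, the reachability observations of Proposition~\ref{closed} drag in the whole triangle interior, and then the inner terminals of a quad attached to $(1,x_i)$ or $(1,x_{i+1})$ (reachable from $1$ only with color $1$) violate Lemma~\ref{lem:triangle}. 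The alternative—that a center escapes to the left of $x_i$ or right of $x_{i+1}$—I intend to rule out via Claim~\ref{cl:next2} combined with Lemma~\ref{lem:inside} applied to the $Q_1$ copy with terminals $1,2$ and $x_i,x_{i+1}$, mirroring the endgame of Lemma~\ref{lem:stretch}.

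\textbf{Main obstacle.}
The hardest part will be the bookkeeping of the several cases for the positions of the two centers under the broken symmetry, since with $(2,x_i),(2,x_{i+1})$ differently colored I cannot simply invoke the clean two-path configuration that made Lemma~\ref{lem:stretch}'s argument uniform. I anticipate needing to treat separately the sub-case where $(2,x_i)$ is color $2$ and $(2,x_{i+1})$ is color $3$ (and its mirror), checking in each that the color-$3$ edge to $2$ acts as a barrier forcing a center and then an attached quad's inner terminals into a monochromatically-reachable interval, so that Lemma~\ref{lem:triangle} (or Lemma~\ref{lem:quad}) applies. Getting the color assignments on the two triangle centers' edges consistent—so that no legal placement remains for at least one center, or so that the confined quad genuinely sees only one color from its outer terminal—is where the care is required.
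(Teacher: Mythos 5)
Your proposal is a plan rather than a proof, and the plan leans on machinery whose hypotheses fail in exactly the situation you are analyzing. Claims~\ref{cl:next1} and~\ref{cl:next2} were proved inside Lemma~\ref{lem:stretch} under the assumption that \emph{all} edges from terminal~2 to the relevant vertical nodes carry the same color~2, so that the monochromatic paths $(x_i,2,x_{i+1})$ and $(y,2,z)$ are available as barriers; here, by the very hypothesis you are refuting, $(2,x_i)$ and $(2,x_{i+1})$ have different colors, and those claims cannot be invoked as stated. Likewise your proposed endgame via Lemma~\ref{lem:inside} applied to the $Q_1$ copy with inner terminals $x_i,x_{i+1}$ requires that all edges from node~2 to the closed interval $[x_i,x_{i+1}]$ use a single color --- again false under your standing assumption. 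You notice the ``broken symmetry'' yourself, but you never say how to repair these tools, and the repair is the entire content of the lemma. (There is also a mild circularity: the positional facts about $x_{i-1},x_{i+2}$ are re-derived in the paper only \emph{after} Lemma~\ref{lem:same} is in hand.)

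The paper's actual argument is organized quite differently: it splits on the color of the edge $(x_i,x_{i+1})$. If that edge has color~1, it uses the observation that any two inner terminals of the quads on $(1,x_i)$ and $(1,x_{i+1})$ are joined by a path of nodes adjacent to terminal~1, none of which can sit in $(x_i,x_{i+1})$; combined with the three-colored barrier $(1,x_i),(2,x_i),(2,x_{i+1})$ this forces all those inner terminals to one side and one of the two quads then violates Lemma~\ref{lem:quad}. If $(x_i,x_{i+1})$ has color~2 or~3, a sequence of barrier arguments (using Lemma~\ref{lem:triangle} once to exclude the right side, then a case split on whether $x_{i+1}$ reaches left of $x_i$) traps the inner terminals of the quads on $(1,x_{i+1})$ and $(2,x_{i+1})$ in $(x_i,x_{i+1})$ and again contradicts Lemma~\ref{lem:quad}. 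The decisive objects are the quads and Lemma~\ref{lem:quad}, not the triangle centers and Lemma~\ref{lem:inside}; your proposal does not identify this case split or the quad-connectivity observation, and as written it does not close.
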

\begin{proof}
Suppose that the edges $(2,x_i), (2,x_{i+1})$ have different colors,
say 2, 3 respectively. We will derive a contradiction.
Assume wlog that $x_i$ is left of $x_{i+1}$.
We distinguish cases depending on the color of the edge $(x_i,x_{i+1})$.

{\em Case 1: $(x_i,x_{i+1})$ has color 1}.
Consider the inner terminals of the quads attached to the edges
$(1,x_i)$ and $(1,x_{i+1})$. They must be in the prime region
(since they are adjacent to $x_i$ or $x_{i+1}$), but they cannot be in
the interval $(x_i,x_{i+1})$ (since it has color 1).

Observe from the definition of $Q_2$ that the two inner terminals $a, b$ 
are connected by two paths of length 2, 
where the intermediate node of each path is adjacent to
one of the terminals.
Therefore, any two inner terminals of a quad $Q$ are connected by
a path consisting of nodes that are adjacent to either outer terminal,
where every other node of the path is an inner terminal.
In particular, any two inner terminals of the quads attached to the edges
$(1,x_i)$ and $(1,x_{i+1})$ are connected by a path of nodes
that are adjacent to node 1.
All of the nodes of these paths must be in the prime region
because they have distance at most 2 in the graph $G \setminus \{1,2\}$ from $x_i$ or $x_{i+1}$.
None of these nodes can be in the interval $(x_i,x_{i+1})$
because the edge $(x_i,x_{i+1})$ is colored 1.
Furthermore, there cannot be an edge connecting a node in the prime region
left of $x_i$ to a node right of $x_{i+1}$ because of the edges
$(1,x_i), (2,x_i), (2,x_{i+1})$ that have colors 1, 2, 3 respectively.
Therefore, either all the inner terminals of both quads
are left of $x_i$ or they are all right of $x_{i+1}$.
In the former case, the quad of $(1,x_{i+1})$ contradicts Lemma \ref{lem:quad}
because all edges from the inner terminals to 1 have color 1 and the edges to $x_{i+1}$
must have color 3 (because they intersect $(1,x_i), (2,x_i)$).
In the latter case, the quad of $(1,x_i)$ contradicts Lemma \ref{lem:quad}
because all edges from the inner terminals to 1 have color 1 and the edges to $x_i$
must have color 2 (because they intersect $(1,x_{i+1}), (2,x_{i+1})$).

{\em Case 2: $(x_i,x_{i+1})$ has color 2 or 3}.
Assume wlog that $(x_i,x_{i+1})$ has color 2 (the case of color 3 is symmetric).
Consider the triangles $(1,x_i,x_{i+1})$, $(2,x_i,x_{i+1})$ 
and the quads attached to their edges.
If one of the inner terminals is right of $x_{i+1}$ then all of them must be there
(within the prime region),
because the edge $(1,x_{i+1})$, the path $(2,x_i,x_{i+1})$
and the edge $(2,x_{i+1})$ use all 3 colors, 
hence there cannot be an edge from a node right of $x_{i+1}$ to a node
(other than $x_i$) left of $x_{i+1}$.
Node $x_i$ can reach nodes right of $x_{i+1}$ only with color 2.
By Lemma \ref{lem:triangle}, the inner terminals of the quads of the
triangles $(1,x_i,x_{i+1})$, $(2,x_i,x_{i+1})$ cannot be right of $x_{i+1}$,
hence they are all left of  $x_{i+1}$.
%If the inner terminals of the quad $(1,x_i)$ is right of $x_{i+1}$ then
%their edges to 1 and $x_i$ are colored 1 and 2,
%contradicting Lemma \ref{lem:quad}.
%Similarly, if the inner terminals of the quad $(2,x_i)$ are right 
%of $x_{i+1}$ then
%their edges to 2 are colored 2 or 3 and the edges to $x_i$ are colored 2,
%contradicting again Lemma \ref{lem:quad}.
%Therefore, the quads of both triangles have all their inner terminals
%left of $x_{i+1}$.

If there is an edge from $x_{i+1}$ to a node left of $x_i$, then the
edge must be colored 3 (because of the edges $(1,x_i), (2,x_i)$), 
in which case node 2 cannot reach the interval $(x_i,x_{i+1})$
(because the edge $(x_i,x_{i+1})$ was assumed to have color 2).
Then all inner terminals of the quad for the edge $(2,x_{i+1})$ must be
left of $x_i$ and their edges to 2 and $x_{i+1}$ are colored 2 or 3, 
contradicting Lemma \ref{lem:quad}.

Therefore, there is no edge from $x_{i+1}$ to a node
left of $x_i$,
hence all inner terminals of the quads for the edges $(1,x_{i+1})$ and $(2,x_{i+1})$
are in the interval $(x_i,x_{i+1})$. All edges from this interval
to 1 have color 1 and to 2 have color 3 (because of the edge $(x_i,x_{i+1})$).
Suppose that the closest inner terminal to $x_{i+1}$ belongs to the $(1,x_{i+1})$ quad,
then its edge to 1 has color 1.
Consider the quad of the edge $(2,x_{i+1})$: the edges from the inner terminals
to 2 and $x_{i+1}$ have color 2 or 3, contradicting Lemma \ref{lem:quad}.
Similarly, if the closest inner terminal to $x_{i+1}$ belongs to the $(2,x_{i+1})$ quad,
then its edge to 2 has color 3, hence the edges from the inner terminals
of the quad of the edge $(1,x_{i+1})$ to 1 and $x_{i+1}$ have colors 1 and 2,
contradicting again Lemma \ref{lem:quad}.
\begin{comment}
%{\em Case 3: $(x_i,x_{i+1})$ has color 3}.
This is similar to Case 2. 
Observe that in this case, no edge can connect a node
in the interval $(1,x_i)$ to a node in the interval $(x_i,2)$
because of the color-1 edge $(1,x_i)$, the color-2 edge $(2,x_i)$
and the color-3 path $(x_i, x_{i+1},2)$.
As before if an inner terminal of a quad from the triangle
$(1,x_i,x_{i+1})$ (or $(2,x_i,x_{i+1})$) is in $(1,x_i)$, then all of
them must be in the interval and we get again a contradiction from the
quad for the edge $(1,x_{i+1})$ (or $(2,x_{i+1})$).
Therefore, the inner terminals of the quads of both triangles are right of $x_i$.

If there is an edge from $x_i$ to the interval $(x_{i+1},2)$, 
then the edge must have color 2, and node 2 cannot reach the interval $(x_i,x_{i+1})$.
Then all inner terminals of the quad for the edge $(2,x_i)$ must be
in the interval $(x_{i+1},2)$ and their edges to 2 and $x_i$ are colored 2 or 3,
contradicting Lemma \ref{lem:quad}.

Therefore, there is no edge from $x_i$ to the interval $(x_{i+1},2)$,
hence all inner terminals of the quads for the edges $(1,x_i)$ and $(2,x_i)$
are in the interval $(x_i,x_{i+1})$. 
The argument now is the same as in Case 2 with the
roles of nodes $x_i$ and $x_{i+1}$ and of colors 2 and 3 interchanged.
\end{comment}
\end{proof}

Let $x_i, i \notin \{ 1, n-1, n \}$ be a vertical node in the prime region 
that is at stretch distance more than 80 from the endpoints
of the prime region.
Then both
its successor $x_{i+1}$ on the path $p$ and its predecessor $x_{i-1}$
are also in the prime region, at stretch distance more than 60
from its endpoints. Therefore, 
the edges from node 1 to $x_{i-1}, x_i, x_{i+1}$
have all color 1, and the edges from node 2 to
$x_{i-1}, x_i, x_{i+1}$ have all the same color (2 or 3) by Lemma \ref{lem:same}.
Assume without loss of generality that $x_i$ is left of $x_{i+1}$,
and that the edges from 2 to $x_{i-1}, x_i, x_{i+1}$ have color 2.
Using the same argument as in Claim \ref{cl:next1} (in the proof of Lemma \ref{lem:stretch}),
we can deduce that $x_{i-1}$ is not embedded between $x_i$ and $x_{i+1}$.
(If $x_{i-1}$ is in $(x_i, x_{i+1})$ then the edge $(x_i,x_{i+1})$ must have color 3 (because $(1,x_{i-1})$ has color 1 and $(2,x_{i-1})$ has color 2), 
hence nodes 1 and 2 can reach the interval $(x_i, x_{i+1})$
only with colors 1 and 2 respectively. The argument in the
last paragraph of the proof of Claim \ref{cl:next1} applies then verbatim.)
Similarly, $x_{i+1}$ is not between $x_i$ and $x_{i-1}$.
Therefore, $x_{i-1}$ is left of $x_i$.
Similarly, since $x_{i+1}$ is at stretch distance more than 60
from the endpoints of the prime region,
its successor $x_{i+2}$ is also in the prime region, and it is right of $x_{i+1}$. 
All edges from $x_{i-1}, x_i, x_{i+1}, x_{i+2}$ to terminal 1 are colored 1, 
and all their edges to terminal 2 have the same color by Lemma \ref{lem:same}, say color 2.

\begin{lemma}\label{lem:fin1} 
Let $x_i, i \notin \{ 1, n-1, n \}$ be any vertical node in the prime region that is at stretch distance more than 80 from the endpoints
of the prime region.
There is an index $j \in \{ i-1, i, i+1 \}$,
such that the edges $(x_j,x_{j+1}), (2,x_j), (2,x_{j+1})$ all
have the same color, say 2, and the centers of both triangles
$(1,x_j, x_{j+1})$, $(2,x_j, x_{j+1})$ are in the
interval $(x_j, x_{j+1})$.
\end{lemma}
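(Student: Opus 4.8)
The plan is to reduce the statement to a purely positional claim — that for some $j\in\{i-1,i,i+1\}$ both centers of the relevant $Q_1$ lie in the interval $(x_j,x_{j+1})$ — and then to prove that positional claim by a crossing argument on forced color-$3$ edges. Throughout, write $c_1^j$ for the center of the triangle $(1,x_j,x_{j+1})$ and $c_2^j$ for the center of $(2,x_j,x_{j+1})$; these are precisely the two centers of the copy of $Q_1$ with outer terminals $1,2$ and inner terminals $x_j,x_{j+1}$, which together with the vertical edge $(x_j,x_{j+1})$ forms a copy of $Q_1+ab$ inside $G$. Recall the setup: $1<x_{i-1}<x_i<x_{i+1}<x_{i+2}<2$ on the line, every edge from $1$ to these nodes is colored $1$, and every edge from $2$ is colored $2$.

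First I would prove the reduction: \emph{if both $c_1^j,c_2^j$ lie in $(x_j,x_{j+1})$, then the vertical edge $(x_j,x_{j+1})$ is colored $2$}, which together with the colors of $(2,x_j),(2,x_{j+1})$ is exactly the conclusion. I rule out the other two colors for $(x_j,x_{j+1})$. It cannot be colored $1$: then $c_1^j\in(x_j,x_{j+1})$ would force $(1,c_1^j)$ to conflict with the color-$1$ edge $(x_j,x_{j+1})$, so $(1,c_1^j)$ could not be colored $1$, contradicting the prime-region property that every edge from $1$ into the prime region is colored $1$. It cannot be colored $3$ either: a color-$3$ edge $(x_j,x_{j+1})$ yields paths of all three colors between $x_j$ and $x_{j+1}$, and then any edge from $2$ to an interior node would conflict with both $(x_j,x_{j+1})$ and $(1,x_{j+1})$, forcing \emph{every} edge from $2$ into $[x_j,x_{j+1}]$ to be colored $2$; but now the copy of $Q_1+ab$ satisfies all hypotheses of Lemma~\ref{lem:inside}, whose conclusion is violated by having both centers inside. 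Hence the vertical edge is colored $2$.

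It remains to prove the positional claim, by contradiction: suppose that for \emph{every} $j\in\{i-1,i,i+1\}$ at least one of $c_1^j,c_2^j$ lies outside $(x_j,x_{j+1})$. By the argument of Claim~\ref{cl:next2} (which applies here because each center is adjacent to both $x_j,x_{j+1}$ and the coloring around these nodes is the one used there), any such outside center lies in an immediately adjacent gap, either the left gap $(x_{j-1},x_j)$ or the right gap $(x_{j+1},x_{j+2})$; call the pair \emph{left-out} or \emph{right-out} accordingly. The edge from that center to the far endpoint of $(x_j,x_{j+1})$ exits the interval and conflicts with the color-$1$ and color-$2$ edges to $1,2$ at the near endpoint, so it is colored $3$. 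Thus each of the three pairs contributes a color-$3$ edge reaching into an adjacent gap. Checking the four configurations for two consecutive pairs, the only one in which the two color-$3$ edges do \emph{not} cross is ``pair $j$ left-out, pair $j+1$ right-out'' (the edges then share the endpoint $x_{j+1}$); in each of the other three cases (left-out/left-out, right-out/right-out, right-out/left-out) the two color-$3$ edges interleave and conflict, impossible for two edges of the same color. Applying this to the consecutive pairs $(i-1,i)$ and $(i,i+1)$ would force pair $i$ to be right-out (from the first) and left-out (from the second) at once, a contradiction. Hence some pair has both centers inside, and by the reduction that pair satisfies the lemma.

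The main obstacle I anticipate is not the crossing bookkeeping — which is clean once directions are fixed — but justifying that each outside center lands in the \emph{immediately} adjacent gap rather than somewhere farther out, i.e. porting Claim~\ref{cl:next2} to the present prime-region setting and to the two flanking pairs $(x_{i-1},x_i)$ and $(x_{i+1},x_{i+2})$ (this is exactly why the statement needs four consecutive vertical nodes at large stretch distance, so that all these gaps and their neighboring edges are well defined and uniformly colored). One must also handle a pair having \emph{both} centers outside, possibly in different gaps: this only adds color-$3$ edges and hence can only create more crossings, so the ``only left-then-right survives'' dichotomy should be read as the statement that there is no way to orient one outside center per pair so as to avoid every forced crossing.
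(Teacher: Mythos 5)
Your first step (if both centers of the pair $(x_j,x_{j+1})$ lie inside the interval, then the vertical edge must be colored $2$, by ruling out color $1$ via the edge $(1,c_1^j)$ and color $3$ via Lemma~\ref{lem:inside} applied to $Q_1+ab$) is exactly the paper's argument for the case $j=i$. Where you diverge is the positional claim. The paper does not argue by a crossing contradiction over all three pairs: it only considers the middle pair. If a center $w$ of the middle pair is outside $(x_i,x_{i+1})$, Claim~\ref{cl:next2} puts it in an adjacent gap, say $(x_{i+1},x_{i+2})$, and then the single color-$3$ edge $(x_i,w)$, together with the color-$1$ and color-$2$ paths through $1$ and $2$, seals off the interval $(x_{i+1},x_{i+2})$: nothing adjacent to both $x_{i+1}$ and $x_{i+2}$ can lie outside it, so \emph{both} centers of the adjacent pair are forced inside, and the same edge $(x_i,w)$ also pins the color of $(x_{i+1},x_{i+2})$ to $2$. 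This needs Claim~\ref{cl:next2} for the middle pair only, which is already available from the discussion preceding the lemma.

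The gap in your route is precisely the step you flagged and did not close: your case analysis of the four left/right configurations is only valid if every outside center of every one of the three pairs lands in the \emph{immediately} adjacent gap. Without that, the LL and RR configurations need not conflict --- e.g.\ if the left-out center of pair $j+1$ sits to the left of the left-out center of pair $j$, the two color-$3$ edges nest rather than cross --- so the ``only LR survives'' dichotomy collapses. Porting Claim~\ref{cl:next2} to the flanking pairs $(x_{i-1},x_i)$ and $(x_{i+1},x_{i+2})$ requires the existence, position and edge colors of $x_{i-2}$ and $x_{i+3}$; this is extra work you have not done, and it outright fails for $i=2$ and $i=n-2$, both of which are permitted by the hypothesis $i\notin\{1,n-1,n\}$. (Your remark about a pair having both centers outside in different gaps is correct in substance --- the two forced color-$3$ edges of such a pair cross each other --- but it too is asserted rather than checked.) The strategy could probably be repaired by restricting $i$ further and carrying out the port, but as written the load-bearing step is missing; the paper's argument is structured exactly so as not to need it.
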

\begin{proof}
As we observed before the lemma, the nodes
$x_{i-1}, x_i, x_{i+1}, x_{i+2}$ are in the prime region in this
order (or the reverse),
all edges from $x_{i-1}, x_i, x_{i+1}, x_{i+2}$ to terminal 1 
are colored 1, 
and all their edges to terminal 2 have the same color by Lemma \ref{lem:same}, say color 2.

Suppose that both centers $c_{1i}, c_{2i}$
of the triangles $(1,x_i, x_{i+1})$, $(2,x_i, x_{i+1})$ are in the
interval $(x_i, x_{i+1})$. 
Consider the copy of $Q_1$ with outer terminals 1, 2 and inner terminals
$x_i, x_{i+1}$.
Node 1 can reach the interval $(x_i, x_{i+1})$ only with color 1.
If $(x_i, x_{i+1})$ is colored 3, then node 2 can reach the interval
only with color 2, contradicting Lemma \ref{lem:inside}.
Therefore, $(x_i, x_{i+1})$ must be colored 2.
Thus, the claim holds for $j=i$.

On the other hand, suppose that a center $w$ of one of the
triangles $(1,x_i, x_{i+1})$, $(2,x_i, x_{i+1})$ is outside the
interval $(x_i, x_{i+1})$. 
Then by (the proof of) Claim \ref{cl:next2},
$w$ must be either in the interval
$(x_{i-1},x_i)$ or in the interval $(x_{i+1}, x_{i+2})$.
Suppose wlog that $w$ is in $(x_{i+1}, x_{i+2})$ - see Fig. \ref{fig:next2}; $z=x_{i+2}$ in the figure.
The edge $(x_i,w)$ has color 3 because it intersects
the edges $(1,x_{i+1})$ and $(2,x_{i+1})$.
Node $x_{i+1}$ cannot reach any node, other than 1,2, outside the
interval $(x_i, x_{i+2})$ because of the
color-1 path $(x_i,1,x_{i+2})$, the color-2 path $(x_i,2,x_{i+2})$, and
the color-3 edge $(x_i,w)$. Similarly, node $x_{i+2}$ cannot reach any node in the interval $(x_i,x_{i+1})$
because of the color-1 path $(x_i,1,x_{i+1})$, the color-2 path $(x_i,2,x_{i+1})$, and
the color-3 edge $(x_i,w)$.
Therefore, the centers $c_{1,i+1}, c_{2,i+1}$ of both triangles
$(1,x_{i+1}, x_{i+2})$, $(2,x_{i+1}, x_{i+2})$ must be in the
interval $(x_{i+1}, x_{i+2})$. The edge $(1,c_{1,i+1})$ is colored 1
and the edge $(x_i,w)$ is colored 3, therefore the edge
$(x_{i+1}, x_{i+2})$ must be colored 2.
Thus, the claim holds for $j=i+1$.
\end{proof}

Since the edge $(x_j,x_{j+1})$ has color 2,
all edges from node 2 to the open interval $(x_j,x_{j+1})$ must have color 3.
We finish the proof by showing that it is impossible to
embed and color the edges of the triangles $(1,x_j, x_{j+1})$, $(2,x_j, x_{j+1})$
so that both centers are in $(x_j,x_{j+1})$
as required by Lemma \ref{lem:fin1}.
This statement is similar to Lemma \ref{lem:inside} for the copy of $Q_1$
with outer terminals 1, 2, and inner terminals $x_i, x_{i+1}$, 
but the important  difference to Lemma \ref{lem:inside},
is that here the color available from node 2 to the open interval
$(x_j, x_{j+1})$ is different than the color of the edges from 2
to the nodes $x_j, x_{j+1}$.
On the other hand, the graph here is more involved and has
quads attached to the edges.

\begin{lemma}\label{lem:fin2}
There is no 3-page embedding in which nodes $x_j, x_{j+1}$ are in the
prime region, with color-1 edges to terminal 1, color-2 edges to terminal 2,
the edge $(x_j, x_{j+1})$ has color 2, 
all edges from the open interval $(x_j, x_{j+1})$ to 1 have color 1
and to node 2 have color 3, and the centers 
of both triangles $(1,x_j, x_{j+1})$, $(2,x_j, x_{j+1})$ 
are in the interval $(x_j, x_{j+1})$.
\end{lemma}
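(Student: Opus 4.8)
The plan is to run an argument parallel to that of Lemma~\ref{lem:inside} for the copy of $Q_1$ with outer terminals $1,2$ and inner terminals $x_j,x_{j+1}$ (whose centers are the two triangle centers $c_1,c_2$), but to feed the extra structure --- the quads hanging off the small triangles --- into Lemmas~\ref{lem:quad} and~\ref{lem:triangle} at the final step. Throughout write $a=x_j$, $b=x_{j+1}$, assume wlog $a$ is to the left of $b$ with $1$ to the left and $2$ to the right, and recall the three connections between $a$ and $b$: the color-$1$ path $(a,1,b)$, the color-$2$ path $(a,2,b)$, and the color-$2$ edge $(a,b)$. By Proposition~\ref{closed} every edge that exits $(a,b)$ and avoids $1,2$ must be colored $3$. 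The essential asymmetry I must keep in mind is that node $1$ reaches the whole closed interval $[a,b]$ with the single color $1$, whereas node $2$ reaches the endpoints with color $2$ but the open interior --- in particular the center $c_2$, via the edge $(2,c_2)$ --- only with color $3$.

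First I would pin down the local picture around the two centers, exactly as in the opening of Lemma~\ref{lem:inside}. Both $c_1,c_2$ lie in $(a,b)$; I fix their left-to-right order, the reversed order being handled by the reflection $a\leftrightarrow b$, $1\leftrightarrow 2$, which preserves the hypotheses. Using that $(a,c_2)$ conflicts with the color-$1$ edge $(1,c_1)$ and that $(b,c_1)$ conflicts with the color-$3$ edge $(2,c_2)$, together with the mutual conflict of the two ``cross'' edges $(a,c_2)$ and $(b,c_1)$, I determine the colors of the four center-to-terminal edges $(a,c_1),(b,c_1),(a,c_2),(b,c_2)$. This is where the proof departs from Lemma~\ref{lem:inside}: because $(2,c_2)$ carries color $3$ rather than color $2$, the surviving colorings split into a branch in which $(a,c_2),(b,c_2)$ are both color $3$ (so that $(a,c_2,b)$ becomes a third, color-$3$ connection between $a$ and $b$) and branches in which one of them is forced to color $2$.

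The endgame I aim for is to trap the inner terminals of one quad inside a subinterval that its two bounding terminals can reach with only two colors, and then invoke Lemma~\ref{lem:quad}. The natural candidates are the quad on the edge $(a,b)$ and the quads on the edges $(2,a),(2,b)$ of the big triangle $(2,a,b)$; the point is that between the relevant endpoints the color-$2$ edge $(a,b)$ and the color-$3$ edge $(2,c_2)$ together remove the third color, leaving only two. In the clean branch where $(a,c_2,b)$ is a color-$3$ path, an attractive alternative is to show, via Proposition~\ref{closed}(3), that all inner terminals of the three quads of the big triangle $(2,a,b)$ fall in the interval $(a,b)$, and then apply Lemma~\ref{lem:triangle} with apex $A=2$: since $2$ reaches the interior only in color $3$, its edges to the inner terminals of the $(2,a)$- and $(2,b)$-quads are automatically monochromatic, and the opposite arc is exactly $(a,b)$, a contradiction. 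To localise the inner terminals I would, as in the proof of Lemma~\ref{lem:in-out}, track by reachability the positions of the central nodes $d_{2a},d_{2b}$ of the small triangles $(2,a,c_2),(2,b,c_2)$ (and $d_{1a},d_{1b}$ on the other side).

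The main obstacle I anticipate is twofold. First, the $1\leftrightarrow 2$ symmetry that let Lemma~\ref{lem:inside} collapse cases is broken here --- reversing the two terminals does not preserve the hypothesis, since node $2$ carries two colors while node $1$ carries one --- so each coloring of the center edges must be dispatched on its own. Second, and more seriously, the confinement step is not handed to us by Proposition~\ref{closed}: even in the branch with a color-$3$ path $(a,c_2,b)$, the component carrying the inner terminals of a $(2,\cdot)$-quad is anchored only at $2,a,c_2$, so a priori it could be drawn entirely \emph{outside} $(a,b)$ on the side toward $2$. The real work is therefore to show, case by case, that placing any such inner terminal outside $(a,b)$ either violates a reachability constraint or squeezes the quad into a subinterval reached by only two colors, so that Lemma~\ref{lem:quad} or Lemma~\ref{lem:triangle} applies. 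It is precisely the color-$3$ edge $(2,c_2)$ --- node $2$'s having ``spent'' color $3$ on the interior --- that denies the trapped quad its third color and yields the contradiction.
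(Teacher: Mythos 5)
Your outline correctly isolates the governing asymmetry (node $2$ reaches the open interval $(x_j,x_{j+1})$ only with color $3$, so $(2,c_2)$ is a color-$3$ edge) and the right toolbox (Proposition~\ref{closed}, Lemmas~\ref{lem:quad} and~\ref{lem:triangle}), but it stops exactly where the work has to be done, and the one concrete endgame you do propose does not go through. In the branch you call ``clean,'' where $(a,c_2,b)$ is a color-$3$ path, the nodes $a,b$ are joined by monochromatic connections of all three colors whose vertex set is $\{1,a,b,c_2\}$ --- node $2$ lies on none of them --- so by Proposition~\ref{closed}(3) the inner terminals of the $(2,a)$- and $(2,b)$-quads, each of which is adjacent to $2$ and hence stays in $2$'s component after deleting $\{1,a,b,c_2\}$, are forced \emph{outside} $(a,b)$, not inside it. Your plan to confine all inner terminals of the big triangle $(2,a,b)$ to the interval $(a,b)$ and apply Lemma~\ref{lem:triangle} with apex $A=2$ therefore fails in precisely this branch. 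You flag this confinement problem yourself in your last paragraph, but you leave it unresolved, which is the whole difficulty.

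The paper's proof supplies the two steps your sketch is missing. First, ordering the centers as $c_1$ left of $c_2$, it rules out \emph{any} color-$2$ edge from $x_j$ into the interval $(c_1,x_{j+1})$ (and hence forces $(x_j,c_2)$ to be color $3$): such an edge would force $(c_1,x_{j+1})$ to color $1$, and then the center of the stellated triangle $(1,c_1,x_{j+1})$ would have no legal position. This uses the extra stellation and is not reached by your plan of ``determining the colors of the four center-to-terminal edges.'' Second, with the color-$3$ path $(x_j,c_2,2)$ in hand together with $(1,x_j)$ of color $1$ and $(2,x_j)$ of color $2$, the line is severed at $x_j$ except through $c_2$, so the entire interior of the \emph{small} triangle $(2,x_j,c_2)$ --- including the inner terminals of all three of its quads --- is pushed to the left of $x_j$; every edge from those terminals to $c_2$ then crosses $x_j$ and must be colored $3$, and the contradiction is Lemma~\ref{lem:triangle} applied with apex $c_2$, not apex $2$. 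Without these two steps your proposal is a plausible plan rather than a proof.
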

\begin{proof}
Assume without loss of generality that the
center $c_{1j}$ of the triangle $(1,x_j, x_{j+1})$ is left of the
center $c_{2j}$ of the triangle $(2,x_j, x_{j+1})$ -
 see Figure \ref{fig:fin2}.

\begin{figure}[h]
\centering
\vspace*{-1.1cm}
\includegraphics[scale=0.6]{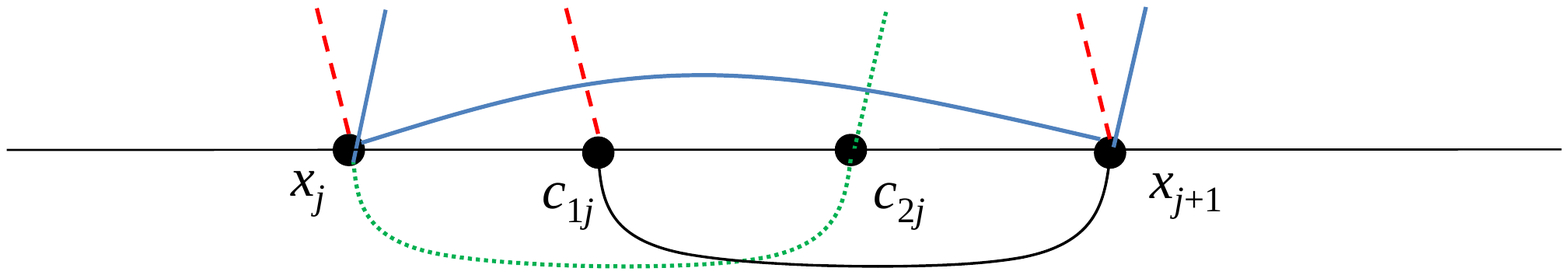}
\vspace*{-8.2cm}
\caption{}
\vspace*{-0.3cm}
\label{fig:fin2}
\end{figure}

If $x_j$ has a color-2 edge to a node in the interval $(c_{1j},x_{j+1})$
(for example, to $c_{2j}$) then the edge $(c_{1j},x_{j+1})$ must have color 1
(because of the color-3 edge $(2,c_{2j})$).
Then there is no position for the center of the triangle
$(1,x_{j+1},c_{1j})$: The center must be in the prime region
because it is adjacent to $x_{j+1}$. Node $x_{j+1}$ cannot reach left of $c_{1j}$
(because of the color-1 edge $(1,c_{1j})$, the color-2 edge from $x_j$ to 
the interval $(c_{1j},x_{j+1})$ and the edge $(2,x_j)$,
and the color-3 edge $(2,c_{2j})$);
node $c_{1j}$ cannot reach right of $x_{j+1}$ 
(because of the color-1 edge $(1,x_{j+1})$,
the color-2 edge $(2,x_{j+1})$ and the color-3 edge $(2,c_{2j})$); and
1 cannot reach the interval $(c_{1j},x_{j+1})$ (because of the color-1
edge $(c_{1j},x_{j+1})$).
Therefore, there is no color-2 edge from $x_j$ to the interval $(c_{1j},x_{j+1})$.
In particular, the edge $(x_j,c_{2j})$ must be colored 3
(it cannot be colored 1 because of the edge $(1,c_{1j})$).

Now consider the triangle $(2,x_j,c_{2j})$ and the quads attached to its edges.
Since there is no color-2 edge from $x_j$ to the interval $(c_{1j},x_{j+1})$,
there is no edge (of any color) from $x_j$ to the right of $c_{2j}$
because of the color-1 edge $(1,c_{1j})$, the color-2 edge $(2,x_{j+1})$
and the color-3 edge $(2,c_{2j})$.
Node 2 cannot reach the interval $(x_j,c_{2j})$ because 
of the color-3 edge $(x_j,c_{2j})$.
Therefore, the center of the triangle $(2,x_j,c_{2j})$
(and the inner terminals of the quad for the edge $(2,x_j)$)
must be left of $x_j$.
Note that no edge can connect a node left of $x_j$ to a node right of $x_j$
(within the prime region), other than $c_{2j}$, 
because of the color-1 edge $(1,x_j)$, the color-2 edge $(2,x_j)$,
and the color-3 path $(x_j,c_{2j},2)$.
Therefore all the inner terminals of the quads attached to all the
edges of the triangle $(2,x_j,c_{2j})$ are left of $x_j$
(and within the prime region).
Their edges to $c_{2j}$ must be colored 3 because of the
conflicting edges $(1,x_j), (2,x_j)$. 
This contradicts Lemma \ref{lem:triangle}.

It follows that there is no 3-page embedding satisfying the conditions of the lemma.
\end{proof}

Lemmas \ref{lem:fin1} and \ref{lem:fin2} provide the desired contradiction to the assumption that $G$ can be embedded in three pages.
 We conclude:

\begin{theorem}\label{thm:main}
There is no 3-page embedding of the graph $G$.
\end{theorem}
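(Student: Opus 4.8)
The plan is to argue by contradiction: assume $G$ admits a 3-page embedding and show that the structural restrictions accumulated in the preceding lemmas force a configuration that Lemma \ref{lem:fin2} has already ruled out. Everything has been arranged so that the final step is a short assembly. First I would fix an arbitrary 3-page embedding, pass to its circular form, and designate the major and minor $(1,2)$ arcs together with the boundary nodes $z_1, z_2$, exactly as in the setup preceding Lemma \ref{lem:crossing}. Lemma \ref{lem:crossing} then bounds the number of vertical edges that straddle the two arcs, which keeps essentially all of the path on the major side; linearizing with the major arc as the interval $(1,2)$ sets the stage for the localization argument.

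The second step is to invoke Lemma \ref{lem:prime} to extract the prime region $I \subseteq (1,2)$: a subinterval containing at least $240$ vertical nodes, from which no edge escapes $(1,2)$, and with the crucial two-color property that all edges from terminal $1$ to $I$ use a single color (say color $1$) while edges from terminal $2$ use only the remaining two colors. Because $I$ is so large, I can choose a vertical node $x_i$ with $i \notin \{1, n-1, n\}$ sitting at stretch distance more than $80$ from the endpoints of $I$; Lemma \ref{lem:stretch} then guarantees that $x_{i-1}, x_{i+1}, x_{i+2}$ and their relevant neighbors also lie inside $I$ and remain far from its endpoints. Lemma \ref{lem:same} pins down the coloring: the edges from terminal $2$ to the consecutive nodes $x_{i-1}, x_i, x_{i+1}, x_{i+2}$ all share one color, which I may take to be color $2$ after the usual normalization.

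With this consecutive run fixed, I would apply Lemma \ref{lem:fin1} to produce an index $j \in \{i-1, i, i+1\}$ for which the edges $(x_j, x_{j+1})$, $(2,x_j)$, $(2,x_{j+1})$ are all color $2$ and both centers of the triangles $(1,x_j,x_{j+1})$ and $(2,x_j,x_{j+1})$ land inside the interval $(x_j, x_{j+1})$. The one nonmechanical bridging observation is that, since $(x_j,x_{j+1})$ has color $2$, every edge from terminal $2$ into the open interval $(x_j,x_{j+1})$ must carry color $3$: such an edge exits $(x_j,x_{j+1})$ and so cannot reuse color $2$, and by the prime-region property it cannot be color $1$ either. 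This is precisely the hypothesis of Lemma \ref{lem:fin2}, whose conclusion flatly contradicts the existence of the configuration that Lemma \ref{lem:fin1} has just forced, and the contradiction completes the proof.

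I do not expect this final assembly to be the hard part, since the genuine difficulty has been front-loaded into the gadget analysis. The conceptual crux is the tension between Lemma \ref{lem:fin1} and Lemma \ref{lem:fin2}: the former shows an embedding is driven, via the quad and triangle machinery, into forcing both triangle centers inside $(x_j,x_{j+1})$, while the latter shows this is incompatible with the color-$3$ edges that terminal $2$ is then obliged to use. The subtle design choice underwriting the whole cascade, as the remark after Lemma \ref{lem:quad} flags, is that $Q_2$ carries the center edge $(c_1,c_2)$ rather than $(a,b)$; without that rigidity Lemma \ref{lem:quad} would fail and the argument would not close.
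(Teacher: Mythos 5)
Your proposal is correct and follows the paper's own argument essentially verbatim: fix an embedding, set up the major arc and the prime region via Lemmas \ref{lem:crossing}--\ref{lem:prime}, pick a deep vertical node, apply Lemmas \ref{lem:same} and \ref{lem:fin1} to force both triangle centers into $(x_j,x_{j+1})$ with the stated colors, observe that edges from terminal $2$ into the open interval must then be color $3$, and contradict Lemma \ref{lem:fin2}. The bridging observation you single out is exactly the sentence the paper inserts between Lemmas \ref{lem:fin1} and \ref{lem:fin2}, so there is nothing to add.
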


\bigskip
\noindent{\bf Acknowledgment.} Work supported by NSF Grants CCF-1703925, CCF-1763970. We thank the anonymous referees for their helpful comments.

\end{document}